\numberwithin{equation}{section}
\newtheorem{thm}{Theorem}[section]
\newtheorem{coro}[thm]{Corollary}
\newtheorem{prop}[thm]{Proposition}
\newtheorem{lem}[thm]{Lemma}
\newtheorem{rem}[thm]{Remark}
\DeclareMathOperator{\id}{I}
\newcommand{\D}{\mathbb{D}}
\newcommand{\N}{\mathbb{N}}
\newcommand{\R}{\mathbb{R}}
\newcommand{\T}{\mathbb{T}}
\newcommand{\Hpd}{H^p\left(\mathbb{D}^d\right)}
\newcommand{\vertiii}[1]{{\left\vert\kern-0.25ex\left\vert\kern-0.25ex\left\vert #1 
    \right\vert\kern-0.25ex\right\vert\kern-0.25ex\right\vert}}
\newcommand{\quilt}{\mathcal{R}}
\title{Simply interpolating and Carleson sequences for Hardy spaces in the polydisc}
\author{Nikolaos Chalmoukis
\orcidlink{0000-0001-5210-8206}
}
\address{Dipartimento di Matematica e Applicazioni, Universit\'a degli studi di Milano Bicocca, via Roberto Cozzi, 55 20125, Milano, Italy}
\email{nikolaos.chalmoukis@unimib.it}
\thanks{N. Chalmoukis is a  member of the Gruppo Nazionale per l'Analisi Matematica, la Probabilit\`a e le loro Applicazioni (GNAMPA) of the
Istituto Nazionale di Alta Matematica (INdAM)}
\author{Alberto Dayan \orcidlink{0000-0002-7346-4354}
}
\address{Fachrichtung Mathematik Universit\"at des Saarlandes, 66123 Saarbr\"ucken, Germany
}\email{dayan@math.uni-sb.de}
\date{}
\thanks{A. Dayan is partially supported by the Emmy Noether Program of the
German Research Foundation (DFG Grant 466012782)}
\subjclass[2020]{Primary 32A35; Secondary 32E30}
\keywords{Holomorphic Hardy spaces, polydisc, bidisc, Carleson-Newmann sequences, Simply interpolating sequences, Universally interpolating sequences}
\begin{document}

\begin{abstract}
    We study the relation between simply and universally interpolating sequences for the holomorphic Hardy spaces $H^p(\mathbb{D}^d)$ on the polydisc. In dimension $d=1$ a sequence is simply interpolating if and only if it is universally interpolating, due to a classical theorem of Shapiro and Shields. In dimension $d\ge2$, Amar showed that Shapiro and Shields' theorem holds for $H^p(\mathbb{D}^d)$ when $p \geq 4$. In contrast, we  show that if $1\leq p \leq 2$ there exist simply interpolating sequences which are not universally interpolating. 
\end{abstract}
\maketitle
\section{Introduction}
Let $ f $ be a holomorphic function in the $d$-dimensional polydisc $\mathbb{D}^d$.
We say that $f$ belongs to the Hardy space $H^p(\mathbb{D}^d)$, if  
\[ \Vert f\Vert_p^p : = \sup_{0<r<1} \int_{\mathbb{T}^d}|f(rz)|^pdm(z)<+\infty,   \]
where $\mathbb{T}^d$ is the $d$-dimensional torus and $m$ is its probability Haar measure. We will also occasionally use the notation $|\,\,\, |$ to denote $m.$
The spaces of bounded holomorphic functions $H^\infty(\mathbb{D}^d)$ is defined analogously.

The present paper grew out of the attempt to better understand interpolating sequences for Hardy spaces in the higher dimensional polydisc. 
In dimension one, the work of Carleson \cite{Carleson58} and Shapiro and Shields \cite{SS61} provides a solid understanding of interpolating sequences. 
Let us spend a few words to recall the basic definitions and results in this simpler case. 

Consider $\Lambda = (\lambda_n)_n$ a sequence of points in the unit disc $\D$. Then
\[
(1-|\lambda_n|^2)^{\frac1p}|f(\lambda_n)|\le \|f\|_p, \,\,\, f\in H^p(\D),
\]
hence for $1\leq p < \infty$ the restriction operator at $\Lambda$ on $H^p(\D)$ is naturally defined as 

\[ T^p_\Lambda(f) = \big( f(\lambda_n)(1-|\lambda_n|^2)^{\frac1p} \big), \,\,\, f\in H^p(\mathbb{D}).  \]
A sequence is called {\it simply interpolating} if $T_\Lambda^p(H^p(\mathbb{D})) \supseteq \ell^p$. Explicitly, if for every sequence of {\it data} $ ( a_n ) \in \ell^p $ there exists an {\it interpolating function} $f\in H^p(\mathbb{D})$, 
in the sense that 
\begin{equation}\label{eq_interpolation}
	f(\lambda_n) = a_n(1-|\lambda_n|^2)^{-\frac1p}, \,\, n = 1,2,\dots
\end{equation}

It is worth mentioning that, since all Hardy spaces contain the polynomials, an interpolation problem like \eqref{eq_interpolation} always has a solution if the number of nodes is finite.  If $\Lambda$ is a simply interpolating, an application of the open mapping theorem yields that there exists a constant $C$, depending only on $\Lambda$ and $p$, such that one can find a solution $f$ for \eqref{eq_interpolation} such that $\Vert f \Vert_p \leq C \Vert (a_n) \Vert_{\ell^p}$ (see Lemma \ref{lemma:c_int}). 

A sequence is called {\it universally interpolating} if it is simply interpolating and $T_\Lambda^p$ maps boundedly $H^p(\mathbb{D})$ into $\ell^p.$ 
In other words a sequence is universally interpolating if for every $\ell^p$ data 
the interpolation problem \eqref{eq_interpolation} has a solution and there exists 
$M>0$ such that 
\begin{equation} \label{eq_Carleson} \sum_{n=1}^{\infty} |f(\lambda_n)|^p(1-|\lambda_n|^2) \leq M \Vert f\Vert^p_p, \,\, \forall f\in H^p(\mathbb{D}).  
\end{equation}

Sequences that satisfy \eqref{eq_Carleson}, are usually called {\it Carleson sequences}. 
If $\mu$ is a positive Borel measure in $\mathbb{D}$  we say that it is an $H^p(\mathbb{D})$-{\it Carleson measure} if there exists $M>0$ such that 
\begin{equation}
	\int_\mathbb{D}|f|^p d\mu \leq M \Vert f\Vert^p_p, \,\,\, \forall f \in H^p(\mathbb{D}).
\end{equation}
Hence $\Lambda$ is a Carleson sequence if and only if the atomic measure $\mu_\Lambda:=\sum_n(1-|\lambda_n|^2)~\delta_{\lambda_n}$
supported on $\Lambda$ is a Carleson measure.
It is a classical theorem of Carleson that the class of the homonymous measures is the same for every $p \geq 1 $ and a measure is such if and only if there exists $C>0$ such that 
\begin{equation}\label{eq:one_box} \mu(S(I)) \leq C |I|, \end{equation}
for every arc $I\subseteq \mathbb{T}$ and every region $S(I)$ of the form 
\[ S(I) = \{ z\in \mathbb{D}\setminus\{0\} : 1-|z| \leq |I|, z/|z| \in I \}. \]
Furthermore, Shapiro and Shields showed that a sequence is universally interpolating for $H^p(\mathbb{D})$ if it is {\it hyperbolically separated}, that is 
\begin{equation*}
	\inf_{n \neq m} \Big| \frac{\lambda_n-\lambda_m}{1-\overline{\lambda_m}\lambda_n} \Big|>0
\end{equation*}
and it is a Carleson sequence. Even more surprisingly, they proved that simply interpolating sequences are the same for all $p\ge1$ and that a simply interpolating sequence for $H^p(\mathbb{D})$  is automatically universally interpolating. In other words, if for some sequence $\Lambda \subseteq \mathbb{D}$ we have $T_\Lambda^p(H^p(\mathbb{D})) \supseteq \ell^p $ then $T_\Lambda^p(H^p(\mathbb{D})) = \ell^p. $     

In dimension $d\geq 2$ the analogous problems concerning $H^p(\mathbb{D}^d)$ functions can be stated using the following restriction operator. Let $\Lambda = (\lambda_n) \subseteq \mathbb{D}^d, \,\, \lambda_n=(\lambda_n^1,\dots,\lambda_n^d) $. Then define  
\begin{equation}
\label{eqn_restriction_operator}
T_\Lambda^p(f) = \Big( f(\lambda_n) \prod_{i=1}^d (1-|\lambda_n^i|^2)^{\frac1p} \Big), \qquad f \in H^p(\mathbb{D}^d). 
\end{equation}

If $p=\infty$, $T^\infty_\Lambda$ is the unweighted restriction operator on $H^\infty(\D^d)$. We say that a sequence $\Lambda$ is;
\begin{itemize}
\item[(SI)] \emph{simply interpolating} for $H^p(\mathbb{D}^d)$  if $T_\Lambda^p(H^p(\mathbb{D}^d))  \supseteq \ell^p $,
\item[(UI)] \emph{universally interpolating} for $H^p(\mathbb{D}^d)$ if $ T_\Lambda^p(H^p(\mathbb{D}^d))  = \ell^p$,  
\item[(CS)] a \emph{Carleson sequence} for $H^p(\mathbb{D}^d)$ if $T_\Lambda^p(H^p(\mathbb{D}^d)) \subseteq \ell^p.$ 
\end{itemize}

Similarly, a positive Borel measure on $\mathbb{D}^d$ is called Carleson if $H^p(\mathbb{D}^d)\subseteq L^p(\mathbb{D}^d,\mu).$ Then $\Lambda$ is a Carleson sequence if and only if the atomic measure 
\begin{equation}
    \label{eqn:muLambda}
    \mu_\Lambda:=\sum_n \left(\prod_{i=1}^d1-|\lambda_n^i|^2\right)~\delta_{\lambda_n}
\end{equation}
is a Carleson measure. As in the one dimensional case if $\Lambda$ is a simply interpolating sequence there exists $C>0$ such that for every $a\in \ell^p$, there exists $f\in H^p(\mathbb{D}^d)$ such that $T_\Lambda^p(f) = a $ and $\Vert f \Vert_p \leq C \Vert a \Vert_{\ell^p}$. The infimum of such $C$ is  the \emph{simple interpolation constant} of $\Lambda$. Similarly, given a Carleson sequence $\Lambda$ we define its \emph{Carleson constant} as the norm of $T^p_\Lambda: H^p(\mathbb{D}^d) \to \ell^p$.
The corresponding separation condition, also called { \it weak separation } becomes 
\[ \inf_{n\neq m } \max_{1\leq i\leq d}  \Big| \frac{\lambda^i_n-\lambda^i_m}{1-\overline{\lambda^i_n}\lambda^i_m} \Big|>0. \]

A first hint for how two or more dimensions are different than one came again from Carleson \cite{Carleson74}. He considered the  Hardy space $h^p(\mathbb{D}^d)$ of {\it separately harmonic} functions, which consists of functions $u\in C^\infty(\mathbb{D}^d) $  such that 
\[ \frac{\partial^2 u}{\partial \overline{z_i}\partial z_i} = 0, 1\leq i \leq d  \,\,\, \text{and} \,\,\,\, \Vert u \Vert^p_p = \sup_{0<r<1}\int_{\mathbb{T}^d}|u(rz)|^p dm(z) < + \infty. \]

In $d=1$, it is an immediate consequence of the M. Riesz Theorem that if $p>1$, then 
$H^p(\mathbb{D}) \subseteq L^p(\mathbb{D},\mu)$ if and only if $h^p(\mathbb{D}) \subseteq L^p(\mathbb{D},\mu)$.  
Consequently, a quite natural conjecture would be that $ h^p(\mathbb{D}^d) \subseteq L^p(\mathbb{D}^d,\mu) $ if and only if the $d$-dimensional version of \eqref{eq:one_box}, namely
\[ \mu(S(I_1)\times \dots S(I_d)) \leq C |I_1| \cdots |I_d|, \]
holds for all arcs $I_1,\dots I_d \subseteq \mathbb{D}$ and some fixed constant $C>0.$ 
Often this condition is referred to as the {\it one box} condition. 
It turns out that this is not the case. In fact, there exists a measure $\mu$ satisfying the one box condition but $ h^p(\mathbb{D}^2) \not \subseteq L^p(\mathbb{D}^2,\mu) $ for any $p>1$. M

As an intermediate step towards our results on interpolation, via a quite elementary 
modification of Carleson's example, we can show a little bit more; there exists a measure $\mu$ in $\mathbb{D}^2$  satisfying the one box condition but $H^p(\mathbb{D}^2) \not \subseteq L^p(\mathbb{D}^2,\mu)$ for any $p\geq 1$. 
The details are given in Section \ref{sec:Carlesons_Quilt}.  

Nonetheless, a complete characterization of Carleson measures for the holomorphic Hardy spaces in higher dimensions has yet to emerge. 
In particular it is not clear whether the sufficient condition of Chang - Stein \cite{Chang1979} is also necessary, or if the class of $H^p(\mathbb{D}^d)$ Carleson measures is the same for every $p\geq 1$ for that matter. We remark that often in the literature the necessity of the Chang - Stein condition has been assumed to be true, but to the best of our knowledge a proof is not known. The Chang-Stein condition characterizes Carleson measures for all Hardy spaces of separately harmonic functions on the polydisc. By inclusions, any Carleson measure for a separately harmonic space is Carleson for its holomorphic counterpart, but the converse implication is not known in dimension $d\ge2$. More details are provided in Section \ref{sec:Carleson}, where we show the equivalence of the two notion of Carlerson measures for a class of reflection-invariant measures.

Regarding interpolating sequences when $d\geq 2$ , the state of affairs is similarly unclear. Varopoulos showed in \cite{Varopoulos72} that an interpolating sequence for $H^\infty(\D^d)$ generates a measure that satisfies the Chang-Stein condition via \eqref{eqn:muLambda}. Nonetheless, Berndtsson et al. \cite{Berndtsson1987} gave a counterexample for the converse implication and Amar \cite{Amar80} showed  that a simply interpolating sequence for $H^2(\D^d)$ needs not to be inteprolating for $H^\infty(\D^d)$, in contrast with the one dimensional case. Finally, in \cite{Amar07} the author studies the relation between interpolating sequences for different Hardy spaces.
On the other hand, a much greater deal is known about the interpolating properties of sequences which are realizations of a certain random point process in the polydisc (see \cite{Dayan2023}, \cite{Chalmoukis2024} and \cite{Chalmoukis2022}).

	The question that motivated this work has been to which extend the Shapiro and Shields' theorem extends to the polydisc. That is, is it true that a simply interpolating sequence is also Carleson, and hence automatically universally interpolating? This is known to be false for the Dirichlet space in the unit disc \cite{Marshall1994}, 
	and for a large class of spaces in the unit disc and the unit ball in \cite{Chalmoukis24}. In this context we prove the following theorem.
\begin{thm}
    \label{thm:main}
    Let $d\ge2$.
     There exists a sequence $\Lambda_0 \subseteq \D^2$ that is simply interpolating for $H^p(\D^2)$ for all $1\le p\le2$, yet it is not a Carleson sequence for $H^r(\D^2)$ for any $r\ge1$.
\end{thm}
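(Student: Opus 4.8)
The plan is to separate the two assertions. For the failure of the Carleson condition I would let $\Lambda_0$ be the atomic realization of (a variant of) Carleson's quilt from Section~\ref{sec:Carlesons_Quilt}, arranging the points so that $\mu_{\Lambda_0}=\sum_n\prod_i(1-|\lambda_n^i|^2)\,\delta_{\lambda_n}$ is comparable to the quilt measure. That measure satisfies the one-box condition but, by the test functions produced there, $H^r(\mathbb{D}^2)\not\subseteq L^r(\mathbb{D}^2,\mu_{\Lambda_0})$ for every $r\ge1$, so this half is essentially inherited, modulo checking that passing to atoms does not spoil the witnessing functions. By construction $\Lambda_0$ is weakly separated. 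The substance of the theorem is therefore the simple interpolation, and the organizing principle is that for $p\le2$ the space $H^p(\mathbb{D}^2)$ is roomy enough to carry interpolants whose norms would be uncontrolled in $H^2$ or $H^4$; this is consistent with Amar's theorem \cite{Amar80}, which rules such a phenomenon out for $p\ge4$.

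To interpolate, I would decompose $\Lambda_0$ into scale-homogeneous pieces $\Lambda^{(k)}$ on which a localized, essentially one-variable interpolation with a constant independent of $k$ is available; the one-box condition is what supplies this uniform local control. Given data $a\in\ell^p$, split $a=\sum_k a^{(k)}$ accordingly and, for each $k$, build $f_k\in H^p(\mathbb{D}^2)$ that solves the interpolation for $a^{(k)}$ on $\Lambda^{(k)}$, vanishes on the remaining nodes, and satisfies the uniform bound $\|f_k\|_p^p\lesssim\|a^{(k)}\|_{\ell^p}^p$. The natural candidates are sums of normalized Szeg\H{o} kernels $\prod_i(1-|\lambda_n^i|^2)^{1/p}(1-\overline{\lambda_n^i}z^i)^{-1}$ over $n\in\Lambda^{(k)}$; because the pieces sit at geometrically separated scales, the spectra of the $f_k$ are concentrated in lacunary ranges, which is what I would exploit in the assembly.

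The assembly is where $p\le2$ is decisive. Setting $f=\sum_k f_k$, an almost-orthogonality (square-function) estimate adapted to the lacunary scales gives $\|f\|_p\lesssim\big\|(\sum_k|f_k|^2)^{1/2}\big\|_p$. Since $p\le2$ yields the pointwise inequality $(\sum_k|f_k|^2)^{1/2}\le(\sum_k|f_k|^p)^{1/p}$, one obtains
\[
\|f\|_p^p\;\lesssim\;\int_{\mathbb{T}^2}\sum_k|f_k|^p\;=\;\sum_k\|f_k\|_p^p\;\lesssim\;\sum_k\|a^{(k)}\|_{\ell^p}^p\;=\;\|a\|_{\ell^p}^p,
\]
which is exactly the simple interpolation estimate of Lemma~\ref{lemma:c_int}. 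For $p>2$ the middle inequality reverses, so the method genuinely stops at $p=2$, in agreement with the dichotomy the theorem is meant to exhibit.

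I expect two main obstacles. The first is the solvability of the localized problems uniformly in $k$: since each $\Lambda^{(k)}$ mixes all aspect ratios, one must verify that the quilt is uniformly weakly separated and that the local interpolation can be carried out, via one-variable Shapiro--Shields applied slicewise, with a constant that does not degenerate as $k\to\infty$. The second, and harder, is making the square-function step rigorous: the spectra of the $f_k$ are only concentrated, not disjoint, so one needs an honest unconditionality bound (a Cotlar--Stein or Schur-type estimate on the kernel overlaps between distinct pieces) rather than a formal appeal to lacunarity, and the case $p=1$ together with the product structure of $\mathbb{D}^2$ forces one to use the holomorphic, rather than real-variable, square-function theory. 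This second point is the technical heart of the argument.
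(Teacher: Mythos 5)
Your overall architecture --- split $\Lambda_0$ into scale-separated quilt pieces, solve the interpolation on each piece with a uniform constant, then assemble --- is the same as the paper's, but the step you treat as supplied by ``uniform local control'' is exactly where the paper's main new idea lives, and the two mechanisms you propose for it would fail. Each quilt piece $\Lambda_M$ has Carleson constant $\gtrsim M$ by design, so it is not locally one-dimensional: a slicewise Shapiro--Shields argument cannot produce constants independent of $M$, and a linear combination of plain normalized Szeg\"o kernels does not interpolate either, because the Gram matrix $(\langle g_n,g_m\rangle)_{n,m}$ is only column bounded by a fixed constant $\Delta$ coming from the one-box condition (Lemma \ref{lem:colum_box}), not close to the identity, so no Neumann series closes. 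The paper's fix (Proposition \ref{prop:bound_inv}) is to replace $S_w$ by the modified kernels $\psi_{w,t}=S_w^{1+t}/\Vert S_w\Vert_2^{1+2t}$: by Lemma \ref{prop:psi_estimate} their Gramian is dominated by $|\langle g_n,g_m\rangle|^{1+t}$, and weak separation turns the fixed column bound in exponent $2$ into an arbitrarily small off-diagonal bound $\Delta(1-\gamma^2)^{(t-1)/2}$ for $t$ large, which is what makes $T^p_\Lambda B_t$ invertible (here the equiareal normalization $\Vert S_\lambda\Vert_2\equiv\mathrm{const}$ is also needed). Moreover, the restriction $1\le p\le 2$ enters precisely there, via Riesz--Thorin between the $\ell^2\to H^2$ and $\ell^1\to H^1$ bounds for $B_t$ --- not in the assembly, where you locate it. The assembly in the paper is soft and works for every $p\ge1$: since the pieces sit at geometrically separated distances from the torus, one-variable Blaschke products yield $H^\infty$ functions $\phi_i$ with $\phi_i|_{\Lambda_j}=\delta_{i,j}$ and $\sup_z\sum_i|\phi_i(z)|\le B$, and $f=\sum_i\phi_i f_i$ satisfies $\|f\|_p^p\le B^p\sum_i\|f_i\|_p^p$ by H\"older. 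Your lacunary square-function assembly is not absurd, but the $f_k$ are localized in space (distance to the boundary), not in frequency, and the unconditionality estimate you defer is exactly what the paper shows one can avoid.

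There is also a gap in the half you call ``essentially inherited.'' Carleson's quilt measure is shown not to be Carleson for the \emph{separately harmonic} space $h^p(\D^2)$; the witnessing test functions are harmonic, not holomorphic, so they say nothing directly about $H^r(\D^2)$, and the issue is not ``passing to atoms.'' The paper transfers the failure to the holomorphic scale by making the configuration invariant under $\sigma(z_1,z_2)=(\overline{z_1},z_2)$ (taking the union of each quilt with its reflection) and invoking Lemma \ref{lem:symmetric_carleson}: for a $\sigma$-invariant measure, being Carleson for some $H^r(\D^2)$ already forces the Chang--Stein condition, which the quilt violates by Corollary \ref{coro_finite_pieces}(c). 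Without some such symmetrization device your first half does not close either.
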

On the other hand, Amar showed  that if $p>2$ and $\Lambda$ is simply interpolating for $H^p(\D^d)$, then there exists a value of $r$ for which $\Lambda$ is Carleson for $H^r(\D^d)$.

\begin{thm}[\cite{Amar2020}]
\label{thm:Amar}
     Let $p>2$. If $\Lambda$ is simply interpolating for $H^p(\D^d)$, then $\Lambda$ is a Carleson sequence for $H^r(\D^d)$, where $1/p+1/r=1/2$.
\end{thm}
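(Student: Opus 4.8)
The plan is to convert the surjectivity of $T^p_\Lambda$ into a Hilbert space sampling estimate, exploiting that the exponent relation $\tfrac1p+\tfrac1r=\tfrac12$ is precisely the H\"older relation $H^r\cdot H^p\subseteq H^2$. Throughout write $w_n=\prod_{i=1}^d(1-|\lambda_n^i|^2)$, so $T^p_\Lambda f=\big(f(\lambda_n)w_n^{1/p}\big)_n$, and let $k_\lambda(z)=\prod_{i=1}^d(1-\overline{\lambda^i}z^i)^{-1}$ be the Szeg\H{o} kernel of $H^2(\D^d)$, so that $\|k_{\lambda_n}\|_2^2=w_n^{-1}$. First I would record the dual form of the hypothesis: since $\Lambda$ is simply interpolating for $H^p$, the open mapping theorem (as in the discussion preceding Lemma \ref{lemma:c_int}) furnishes a bounded right inverse of $T^p_\Lambda$, whose values on the coordinate vectors are \emph{dual functions} $\phi_k\in H^p(\D^d)$ with $\phi_k(\lambda_n)=\delta_{kn}w_n^{-1/p}$ and, for every $a\in\ell^p$,
\[
\Big\|\sum_k a_k\phi_k\Big\|_{H^p}\le C\,\|a\|_{\ell^p}.
\]

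Next I would bring in the product structure. Fix $f\in H^r$ and $a\in\ell^p$ and set $\Psi_a=\sum_k a_k\phi_k\in H^p$. By H\"older, $f\Psi_a\in H^2$ with $\|f\Psi_a\|_2\le\|f\|_r\|\Psi_a\|_p\le C\|f\|_r\|a\|_{\ell^p}$, while the identity $w_n^{1/2}=w_n^{1/r}w_n^{1/p}$ together with $\Psi_a(\lambda_n)=a_n w_n^{-1/p}$ gives the pointwise relation
\[
\big(T^2_\Lambda(f\Psi_a)\big)_n=(T^r_\Lambda f)_n\,a_n,\qquad n\ge 1.
\]
Because $\tfrac1{r'}=\tfrac12+\tfrac1p$, every $b\in\ell^{r'}$ factors as $b_n=u_nv_n$ with $\|u\|_{\ell^p}\|v\|_{\ell^2}=\|b\|_{\ell^{r'}}$, which is the dual statement of the elementary extremal identity $\sup_{\|a\|_{\ell^p}\le1}\big\|\big((T^r_\Lambda f)_n a_n\big)_n\big\|_{\ell^2}=\|T^r_\Lambda f\|_{\ell^r}$. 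Combining the last three displays reduces the sought Carleson bound $\|T^r_\Lambda f\|_{\ell^r}\le C\|f\|_r$ to a uniform \emph{Hilbert space} sampling estimate $\|T^2_\Lambda(f\Psi_a)\|_{\ell^2}\le C\|f\Psi_a\|_2$ on the products $f\Psi_a$, i.e.\ to a Carleson-type bound at the level of $H^2$.

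This last step is the heart of the matter, and it is here that $p>2$ is indispensable. Since $p\ge2$ the embeddings $H^p\hookrightarrow H^2$ and $\ell^2\hookrightarrow\ell^p$ are contractive, so the $\ell^p$-synthesis bound of the first paragraph upgrades to the Hilbert Bessel bound $\|\sum_k a_k\phi_k\|_{H^2}\le C\|a\|_{\ell^2}$; the task then becomes to transfer this information, through the biorthogonality $\langle w_k^{1/p}\phi_k,k_{\lambda_n}\rangle=\delta_{kn}$ between the dual functions and the kernels, into the upper frame (Carleson) bound for the normalized kernels $w_n^{1/2}k_{\lambda_n}$. I expect this transfer to be the main obstacle: it is exactly a \emph{lower-bound-implies-upper-bound} passage for systems of reproducing kernels, the implication that Theorem \ref{thm:main} shows to fail when $p\le2$. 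Making it work should require using the embedding $H^p\subseteq H^2$ quantitatively to control the Gram structure of $\{k_{\lambda_n}\}$, rather than a purely formal duality, and this is where the restriction $p>2$ must be spent.
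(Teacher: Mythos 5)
There is a genuine gap. Your setup is sound and in fact parallels the beginning of Amar's argument: the dual functions $\phi_k$, the H\"older relation $H^r\cdot H^p\subseteq H^2$, and the factorization $\ell^{r'}=\ell^2\cdot\ell^p$ all correctly reduce the claim to the estimate $\|T^2_\Lambda(f\Psi_a)\|_{\ell^2}\le C\|f\Psi_a\|_2$. But that estimate \emph{is} an $H^2$ Carleson bound, i.e.\ essentially the conclusion of the theorem (by Lemma \ref{lemma:interpolation_carleson}, Carleson for $H^2$ already implies Carleson for $H^r$ with $r>2$), and your last paragraph openly does not prove it: the proposed "transfer through biorthogonality" is a lower-bound-implies-upper-bound passage for kernel systems that has no general mechanism behind it, and restricting attention to the special products $f\Psi_a$ gives you no leverage that you actually use. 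So the proof stops exactly where the real work begins.

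The paper closes this gap by a different mechanism that never requires an a priori Carleson or frame estimate. Working with finite sections $(\lambda_n)_{n=1}^N$, one takes the minimal dual system $(h_n)$ of the normalized kernels $(g_n)$ in $H^2(\D^d)$, so that $\|h_n\|_2\ge1$ and the orthogonal projection onto $\mathrm{span}(g_n)$ is $P_N(F)=\sum_n F(\lambda_n)\|S_{\lambda_n}\|_2^{-1}h_n$ with $\|P_N\|=1$ for free. Orlicz's lemma on signs then bounds $\sum_n|g(\lambda_n)|^r\|S_{\lambda_n}\|_2^{-2}$ \emph{from above} by $\|\sum_n\varepsilon_n|g(\lambda_n)|^{r/2}\|S_{\lambda_n}\|_2^{-1}h_n\|_2^2$ for a suitable choice of unimodular $\varepsilon_n$; choosing $f\in H^p$ by simple interpolation so that this expression equals $\|P_N(fg)\|_2^2\le\|fg\|_2^2\le\|f\|_p^2\|g\|_r^2$, and using the interpolation constant to control $\|f\|_p$, one arrives at the self-improving inequality $X\le C\,X^{2/p}\|g\|_r^2$ with $X=\sum_n|g(\lambda_n)|^r\|S_{\lambda_n}\|_2^{-2}$. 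This is where $p>2$ is actually spent: the exponent $2/p<1$ lets one rearrange to $X\le C'\|g\|_r^r$ uniformly in $N$. In your draft, by contrast, $p>2$ is invoked only for the contractive embedding $H^p\hookrightarrow H^2$, which does not suffice. The missing ideas are therefore: the dual system with $\|h_n\|_2\ge1$, Orlicz's sign lemma, the norm-one projection $P_N$ replacing any Carleson estimate, and the rearrangement of the resulting self-improving inequality.
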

If $p\geq 4$ and $1/p+1/r=1/2$, then $r\le4$. Moreover, in Lemma \ref{lemma:interpolation_carleson} we will observe that if $\Lambda$ is a Carleson sequence for $H^p(\D^d)$ and $q>p$, then $\Lambda$ is a Carleson sequence also for $H^q(\D^d)$. Therefore from Theorem \ref{thm:main} and Theorem \ref{thm:Amar} we deduce the following.
\begin{coro}
Let $d\ge2$. If $p\geq 4$ and $\Lambda \subseteq \mathbb{D}^d$ is a simply interpolating sequence for $H^p(\mathbb{D}^d)$, then $\Lambda$ is also universally interpolating for $H^p(\mathbb{D}^d)$.

On the other hand, for all $1\leq p\le2$ there exists a sequence that is simply interpolating for $\Hpd$  but that is not universally interpolating for $H^p(\D^d)$.  
\end{coro}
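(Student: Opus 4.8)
The plan is to assemble the corollary directly from the two theorems and the monotonicity lemma already at hand, so that no genuinely new analysis is needed beyond a dimension-raising bookkeeping step. For the first assertion, suppose $p\geq4$ and $\Lambda$ is simply interpolating for $H^p(\D^d)$. First I would invoke Theorem~\ref{thm:Amar} to obtain that $\Lambda$ is a Carleson sequence for $H^r(\D^d)$ with $1/r=1/2-1/p$. The only numerical point is that $p\geq4$ forces $1/r\geq1/4$, hence $r\leq4\leq p$. If $r=p$ (which occurs exactly when $p=4$), this already says $\Lambda$ is Carleson for $H^p$; if $r<p$, then Lemma~\ref{lemma:interpolation_carleson} upgrades the Carleson property from $H^r$ to $H^p$. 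Either way $T_\Lambda^p(\Hpd)\subseteq\ell^p$, and combined with the hypothesis $T_\Lambda^p(\Hpd)\supseteq\ell^p$ this gives $T_\Lambda^p(\Hpd)=\ell^p$, which is precisely (UI).

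For the second assertion I would take the sequence $\Lambda_0\subseteq\D^2$ furnished by Theorem~\ref{thm:main}: it is simply interpolating for $H^p(\D^2)$ for every $1\leq p\leq2$, yet Carleson for no $H^r(\D^2)$. When $d=2$ this settles the claim at once, since the failure of (CS) in particular excludes (UI) while (SI) holds. For $d>2$ I would lift $\Lambda_0$ by padding with zeros, setting $\mu_n=(\lambda_n,0,\dots,0)\in\D^d$. Because $1-|0|^2=1$, the restriction weights $\prod_{i=1}^d(1-|\mu_n^i|^2)$ coincide with the two-dimensional weights $\prod_{i=1}^2(1-|\lambda_n^i|^2)$, and the dummy-variable embedding $g\mapsto f$ with $f(z_1,\dots,z_d)=g(z_1,z_2)$ is an isometry $H^p(\D^2)\to H^p(\D^d)$ that reproduces the interpolation values. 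Transporting the two-dimensional interpolants through this embedding shows $(\mu_n)$ is simply interpolating for $\Hpd$; conversely, restricting any hypothetical $H^r(\D^d)$-Carleson inequality to functions of the form $f(z)=g(z_1,z_2)$ would force $\Lambda_0$ to be $H^r(\D^2)$-Carleson, contradicting Theorem~\ref{thm:main}. Hence $(\mu_n)$ is simply interpolating but not Carleson, and therefore not universally interpolating.

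The genuinely new content is slim, and the one step requiring care is the dimension-raising: I must confirm that both the simple-interpolation property and the failure of the Carleson condition survive the zero-padding embedding. This rests on the elementary facts that $g\mapsto g(z_1,z_2)$ is norm-preserving on $H^p$ of the polydisc and that the adjoined coordinates contribute trivial weight factors; these are the verifications I would write out in full, the remainder being a direct citation of Theorem~\ref{thm:Amar}, Theorem~\ref{thm:main}, and Lemma~\ref{lemma:interpolation_carleson}.
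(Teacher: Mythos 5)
Your proposal is correct and follows essentially the same route as the paper: the first assertion is exactly the paper's deduction (Theorem~\ref{thm:Amar} gives the Carleson property for $H^r$ with $1/p+1/r=1/2$, the inequality $p\ge4$ forces $r\le4\le p$, and Lemma~\ref{lemma:interpolation_carleson} upgrades to $H^p$), while the second assertion is a direct citation of Theorem~\ref{thm:main}. The only point you spell out that the paper leaves implicit is the zero-padding embedding of $\D^2$ into $\D^d$, and your verification of it (trivial weight factors, isometric dummy-variable extension, and restriction of the Carleson inequality to functions of two variables) is accurate.
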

It is evident that the theorem leaves the gap $2<p<4$ for which we can not answer the question on universal interpolating sequences completely. This deficiency stems once again from our poor understanding of Carleson measures for the polydisc.

The construction of the sequence $\Lambda_0$ is intimately connected to the counterexample of Carleson for measures which satisfy the one box condition but are not Carleson for $h^p(\mathbb{D}^2).$ The details of the construction are provided in Section \ref{sec:Carlesons_Quilt}. In Section \ref{sec:Carleson} we discuss the relation between Carleson measures for separately harmonic and holomorphic Hardy spaces, while Section \ref{sec:FangXia} contains the necessary tools to show that $\Lambda_0$ is simply interpolating for $1\le p\le2$. This will lead to the proof of Theorem \ref{thm:main}, which is discussed in Section \ref{sec:proof}. A simplified version of Amar's argument for the proof of Theorem \ref{thm:Amar} is contained in Section \ref{sec:amar}.

\section{Proof of Theorem \ref{thm:Amar}}
\label{sec:amar}
In what follows a recurring object is the Szeg\"o kernel in the polydisc.
That is, the following function; 
\begin{equation}
\label{eqn:szego}
S(z,w) = \prod_{i=1}^d\frac{1}{1-\overline{w^i}z^i} \qquad z=(z^1, \dots,z^d), w=(w^1, \dots, w^d) \in \mathbb{D}^d. 
\end{equation}
When considering $S$ as a (holomorphic) function of $z$ for fixed $w$ we will write $S_w$ instead. 
In fact $S$ is exactly the reproducing kernel of the reproducing kernel Hilbert space
$H^2(\mathbb{D}^d)$ with the standard inner product which we denote by $\langle \,\,\, , \,\, \rangle $. In particular 
\[ \Vert S_w \Vert^2_2 =\prod_{i=1}^d \frac{1}{1-|w^i|^2}, \,\,\, w\in \mathbb{D}^d. \]
We will also denote by $g_w$ the normalized Szeg\"o kernel  at $w$, that is 
\begin{equation}
\label{eqn:normalized_Szego}
g_w = S_w/\Vert S_w\Vert_2.
\end{equation}
The argument in \cite[Theorem 2.2]{Amar2020} makes implicitly use of the existence of a constant of interpolation, whose existence follows from the following standard lemma in functional analysis. Notice that since pointwise evaluations are bounded linear functionals on Hardy spaces the restriction operator $T^p_\Lambda$ is always closed. 
\begin{lem}
    \label{lemma:c_int}
    Let $T:X\to Y$ a closed and surjective linear operator between two Banach spaces $X,Y$. Then there exists $C>0$ such that for every $y\in Y$ there exists $x \in X$ such that \[ \Vert x \Vert_X \leq C\Vert y \Vert_Y, \,\,\, \text{and} \,\,\, Tx=y. \]
\end{lem}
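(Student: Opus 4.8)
The plan is to reduce the statement to the open mapping theorem. The only delicate point is that $T$ is assumed merely closed, not bounded, so one cannot apply the open mapping theorem to $T$ directly; instead I would transfer the problem to the graph of $T$, where boundedness is automatic.

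First I would consider the graph
\[ G(T) = \{ (x, Tx) : x \in X \} \subseteq X \times Y, \]
equipped with the norm $\Vert (x, Tx)\Vert = \Vert x\Vert_X + \Vert Tx\Vert_Y$. Since $X$ and $Y$ are Banach and $T$ is closed by hypothesis, $G(T)$ is a closed subspace of the Banach space $X \times Y$, and hence a Banach space in its own right.

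Next I would introduce the coordinate projection $P : G(T) \to Y$, $P(x, Tx) = Tx$. This map is linear and bounded, with $\Vert P\Vert \le 1$, and it is surjective precisely because $T$ is surjective. Applying the open mapping theorem to $P$ shows that $P$ is open, so the image under $P$ of the open unit ball of $G(T)$, being an open set containing the origin, contains some ball $B_Y(0, \delta)$ with $\delta > 0$.

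Finally I would conclude by homogeneity. Given $y \in Y$ with $y \neq 0$, the rescaled vector $\frac{\delta}{2\Vert y\Vert_Y}\, y$ lies in $B_Y(0,\delta)$, so there exists $(x', Tx') \in G(T)$ of graph norm less than one with $Tx' = \frac{\delta}{2\Vert y\Vert_Y}\, y$. Setting $x = \frac{2\Vert y\Vert_Y}{\delta}\, x'$ yields $Tx = y$ together with $\Vert x\Vert_X \le \frac{2\Vert y\Vert_Y}{\delta}\,\Vert (x', Tx')\Vert \le \frac{2}{\delta}\Vert y\Vert_Y$, so that $C = 2/\delta$ works (the case $y = 0$ being trivial). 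Since the argument is a direct invocation of the open mapping theorem after passing to the graph, I do not expect any genuine obstacle; the only point worth emphasizing is the passage to the graph norm, which is exactly what circumvents the fact that $T$ itself is not assumed continuous.
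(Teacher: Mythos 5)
Your proof is correct and follows essentially the same route as the paper: pass to the graph norm so that the closed graph hypothesis turns $T$ into a bounded surjection from a Banach space, then invoke the open mapping theorem and rescale. The paper phrases this as equipping the domain $D_T$ with the graph norm rather than working on the graph itself, but these are isometrically the same argument.
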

\begin{proof} Let $D_T$ the linear submanifold of $X$ where the operator $T$ is defined. By the fact that $T$ is closed $T:D_T \to Y$ becomes a bounded surjective operator if we equip $D_T$ with the graph norm. Then the lemma follows by an application of the open mapping theorem. 
\end{proof}

   The proof of \cite[Theorem 2.2]{Amar2020} can be summarized as follows. Fix $N$ in $\mathbb{N}$, and consider the normalized kernel functions $(g_n)_{n=1}^N$ in $H^2(\mathbb{D}^d)$ associated to the points $(\lambda_n)^N_{n=1}$. Let $(h_n)_{n=1}^N$ be the minimal dual system of $(g_n)_{n=1}^N$ in $H^2(\mathbb{D}^d)$. Namely, each $h_n$ belongs to the linear span of $(g_n)_{n=1}^N$, and $\langle g_n, h_m\rangle=\delta_{n, m}$. Since the normalized kernel are linearly independent, then so is the collection of their dual system. Moreover, the projection $P_N$ of $H^2(\mathbb{D}^d)$ onto the linear span of  $(g_n)_{n=1}^N$ can be written as
\[
P_N(f):=\sum_{n=1}^N\langle f, g_n\rangle~h_n=\sum_{n=1}^N f(\lambda_n) \Vert S_{\lambda_n}\Vert_2 ^{-1} h_n\qquad f\in H^2(\mathbb{D}^d).
\]

Fix some $g\in H^r(\D^d)$. Since $\|h_n\|_{2}\ge1$ for all $n$, by Orlicz's Lemma (see \cite[Theorem 3.1.5]{Nikolski02}) one can find $(\varepsilon_n)_{n=1}^N\subseteq \mathbb{T}$ such that 
\[
\sum_{n=1}^N|g(\lambda_n)|^r \Vert S_{\lambda_n} \Vert^{-2}_2 \le \sum_{n=1}^N |g(\lambda_n)|^r \Vert S_{\lambda_n} \Vert^{-2}_2 \Vert h_n \Vert_2 ^2  \leq\left\|\sum_{n=1}^N\varepsilon_n |g(\lambda_n)|^{\frac{r}{2}} \Vert S_{\lambda_n} \Vert^{-1}_2 h_n\right\|_{2}^2.
\]

Since $\Lambda$ is a simply interpolating sequence for $H^p(\D^d)$, by Lemma \ref{lemma:c_int} we can find a function $f\in H^p(\mathbb{D}^d)$ such that 
\[ f(z_n) = \varepsilon_n |g(\lambda_n)|^{\frac{r}{2}}g(\lambda_n)^{-1}, \quad \| f \|^p_p \leq C \sum_{n=1}^N|g(\lambda_n)|^r \| S_{\lambda_n} \|_2^{-2}, \]

where $C$ does not depend on $N$. Combining the above relations we have that 
\begin{align*}
 \sum_{n=1}^N |g(\lambda_n)|^r \| S_{\lambda_n} \|_2^{-2} & \leq \left\| \sum_{n=1}^N f(\lambda_n) g(\lambda_n) \| S_{\lambda_n} \|_2^{-1} h_n \right\|_2^2 \\
 & = \| P_N(fg) \|_2^2 \\
 & \leq \| f g \|_2^2 \\
 & \leq \| f \|_p^2 \|g\|_r^2 \\
 & \leq C \Big( \sum_{n=1}^N|g(\lambda_n)|^r \| S_{\lambda_n} \|_2^{-2} \Big)^{\frac{2}{p}} \|g \|^2_r.
\end{align*}

Rearranging the above inequality the claim follows, since $C$ is independent of $N$.

\section{Carleson measures and the separately harmonic Hardy space in the bidisc}
\label{sec:Carleson}
In the introduction we alluded to the Chang - Stein characterization of Carleson measures for the separately harmonic Hardy spaces $h^p(\mathbb{D}^d)$. 
We shall take a closer look now. A rectangle $R\subseteq \mathbb{T}^d$ for us is going to be just a set of the form $I_1 \times \dots \times I_d$ where $I_m \subseteq \mathbb{T}$ are arcs. 
The corresponding product of regions $S(I_1)\times \dots \times S(I_d)$ will be denoted by $S(R)$ and we will call it a box. 
More generally, if $U\subseteq \mathbb{T}^d$ is an open set then 
\[ S(U):=\bigcup_{\text{rectangles} \,\, R \subseteq U} S(R). \]

As noted by Chang \cite{Chang1979} the following theorem can be extracted from \cite[p. 236]{Stein1970}. 

\begin{thm} 	Let $  1<p < \infty$ and $\mu$ a positive finite Borel measure on $\mathbb{D}^d$. 
	Then $h^p(\mathbb{D}^d) \subseteq L^p(\mathbb{D}^d,\mu)$ if and only if there exists a constant $C>0$ such that for every open set $U \subseteq \mathbb{T}^d$ 
	\[ \mu(U) \leq C |U|. \]

\end{thm}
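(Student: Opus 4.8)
The plan is to pass to boundary values and reduce the statement to the classical machinery of the product Poisson integral and the strong maximal function. For $1<p<\infty$ one has the polydisc analogue of the one–dimensional fact that $\Harpd$ consists exactly of the Poisson integrals of $L^p(\mathbb{T}^d)$: every $u\in\Harpd$ is of the form $u=P[f]$ for a unique $f\in L^p(\mathbb{T}^d)$ with $\|f\|_{L^p}=\|u\|_{\Harpd}$, where $P[f](z)=\int_{\mathbb{T}^d}\prod_{i=1}^d P_{r_i}(\theta_i-\phi_i)f(\phi)\,dm(\phi)$, $r_i=|z^i|$ and $\theta_i=\arg z^i$. I would take this as known and phrase both implications in terms of $f$. (Here and below $\mu(S(U))$ denotes the measure of the Carleson region $S(U)\subseteq\mathbb{D}^d$ over the open set $U\subseteq\mathbb{T}^d$, which is what the displayed estimate in the statement must mean.)

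For the \emph{necessity} of the open–set condition I would test the embedding against the single function $u_U:=P[\chi_U]$. Since for each fixed radius the product Poisson kernel is a probability measure on $\mathbb{T}^d$, Jensen's inequality together with $\chi_U^p=\chi_U$ gives $\|u_U\|_{\Harpd}^p\le |U|$. On the other hand, if $z\in S(U)$ then $z\in S(R)$ for some rectangle $R=I_1\times\dots\times I_d\subseteq U$; because the Poisson kernel factorizes and $\chi_U\ge\prod_i\chi_{I_i}$, we obtain $u_U(z)\ge\prod_{i=1}^d P[\chi_{I_i}](z^i)\ge c^d>0$, using the elementary one–dimensional lower bound for the Poisson integral of an arc over its Carleson box. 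Feeding $u_U$ into $\Harpd\subseteq L^p(\mathbb{D}^d,\mu)$ then yields $c^{dp}\mu(S(U))\le\int|u_U|^p\,d\mu\le C\|u_U\|_{\Harpd}^p\le C|U|$, as desired. Note that this step genuinely uses the full open set $U$, not just a single rectangle, which is consistent with the failure of the one box condition discussed in the introduction.

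For the \emph{sufficiency} I would combine a pointwise bound with a layer–cake argument. Majorizing each one–dimensional Poisson kernel by its radially decreasing majorant and decomposing it dyadically, the product kernel is dominated by a summable combination of normalized indicators of rectangles centered at $\theta=(\theta_1,\dots,\theta_d)$ with side lengths $\ge\delta_i:=1-|z^i|$. Summing the geometric coefficients yields $|u(z)|\le C\,M_s^{\vec\delta}f(\theta)$, where $M_s^{\vec\delta}$ is the strong maximal function restricted to rectangles $R\ni\theta$ with $|I_i|\ge\delta_i$. Consequently, if $|u(z)|>C\lambda$ there is a rectangle $R\ni\theta$ with $|I_i|\ge\delta_i$ and $A_Rf:=|R|^{-1}\int_R|f|>\lambda$; the size condition is exactly the requirement $z\in S(R)$, while $A_Rf>\lambda$ forces $R\subseteq U_\lambda:=\{M_sf>\lambda\}$. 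Hence $\{|u|>C\lambda\}\subseteq S(U_\lambda)$ with $U_\lambda$ open. The open–set hypothesis gives $\mu(S(U_\lambda))\le C|U_\lambda|=C|\{M_sf>\lambda\}|$, and integrating in $\lambda$ through the distribution function,
\[
\int_{\mathbb{D}^d}|u|^p\,d\mu\lesssim\int_0^\infty\lambda^{p-1}\,|\{M_sf>\lambda\}|\,d\lambda\approx\|M_sf\|_{L^p}^p\lesssim\|f\|_{L^p}^p=\|u\|_{\Harpd}^p,
\]
the last step being the $L^p$–boundedness of the strong maximal function.

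The main obstacle is the pointwise estimate in the sufficiency direction, and in particular the bookkeeping needed to match the scale of the maximal operator to the Carleson box: one must verify that the rectangles produced by the dyadic decomposition all have side lengths $\ge 1-|z^i|$, so that the resulting $R$ simultaneously satisfies $z\in S(R)$ and $R\subseteq U_\lambda$. This is also precisely where the hypothesis $p>1$ is indispensable: the strong maximal function fails to be of weak type $(1,1)$, so the final $L^p$ estimate—and hence the theorem—breaks down at the endpoint $p=1$.
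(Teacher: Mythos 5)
Your argument is correct, but there is nothing in the paper to compare it against: the paper does not prove this theorem, it quotes it, noting only that it ``can be extracted from'' Stein's book as observed by Chang. What you have written is essentially the classical argument behind that citation. You were right to read the displayed condition as $\mu(S(U))\le C|U|$ (with $S(U)=\bigcup_{R\subseteq U}S(R)$ as defined just before the theorem), since $\mu$ lives on $\mathbb{D}^d$ while $U\subseteq\mathbb{T}^d$. The necessity direction is sound: $\|P[\chi_U]\|_{h^p}^p\le|U|$ by contractivity of the Poisson integral, and the uniform lower bound $P[\chi_I]\ge c$ on $S(I)$ holds because an arc $I\ni\arg w$ with $|I|\ge 1-|w|$ contains a half-interval of length $\tfrac12(1-|w|)$ adjacent to $\arg w$, which captures a fixed fraction of the Poisson mass. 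The sufficiency direction correctly isolates the two points where the hypotheses are used: the pointwise domination $|P[f](z)|\lesssim M_s^{\vec\delta}|f|(\theta)$ by the strong maximal function restricted to rectangles with side lengths at least $1-|z^i|$, which gives the inclusion $\{|u|>C\lambda\}\subseteq S(\{M_s f>\lambda\})$ and hence makes the \emph{open-set} (rather than one-box) condition exactly what is needed; and the Jessen--Marcinkiewicz--Zygmund $L^p$ bound for $M_s$, which is where $p>1$ is indispensable. The only facts you take as known --- the Poisson representation of $h^p(\mathbb{D}^d)$ for $p>1$ and the $L^p$-boundedness of the strong maximal operator --- are standard and appropriately flagged, so I see no gap.
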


To the best of our knowledge, it is not known if this characterization remains true for the holomorphic Hardy spaces. The sufficiency of the Chang-Stein condition is clear by inclusions.

In this part we will settle for a more modest objective. Consider the map $\sigma : \mathbb{D}^2 \to \mathbb{D}^2, \,\, \sigma(z_1,z_1)=(\overline{z_1},z_2).$ Given $\mu \in M_+(\mathbb{D}^2)$ we will denote by $\sigma_*\mu$ the pushforward of $\mu$ via the map $\sigma. $ That is, for every $\varphi \in L^1(\mathbb{D}^2,\mu)$ 
\[ \int_{\mathbb{D}^2}\varphi d\sigma_* \mu = \int_{\mathbb{D}^2}\varphi\circ\sigma d\mu. \]

\begin{lem}\label{lem:symmetric_carleson}
	Suppose that $ \mu \in M_+(\mathbb{D}^2)$ and $1<p<\infty.$ If $\mu$ and $\sigma_*\mu$ are Carleson for $H^p(\mathbb{D}^2)$ then $\mu$ satisfies the Chang-Stein condition. 
\end{lem}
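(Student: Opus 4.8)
The plan is to deduce the Chang--Stein condition from the Chang--Stein theorem quoted above: since $1<p<\infty$, it suffices to prove that $\mu$ is Carleson for the separately harmonic space $h^p(\mathbb{D}^2)$, i.e. that $\int_{\mathbb{D}^2}|u|^p\,d\mu\lesssim\|u\|_p^p$ for every $u\in h^p(\mathbb{D}^2)$. The whole point is to reduce an estimate for a separately harmonic $u$ to estimates for \emph{holomorphic} functions, where the hypotheses on $\mu$ and $\sigma_*\mu$ can be used; the bridge is the M.~Riesz theorem, which is exactly why $p$ is restricted to $(1,\infty)$.

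First I would decompose $u$ according to the signs of its Fourier frequencies in the two variables. Writing $P_+$ for the Riesz projection onto nonnegative frequencies on $\mathbb{T}$ and $P_-=\mathrm{I}-P_+$, the tensor operators $P_{\varepsilon_1}\otimes P_{\varepsilon_2}$ with $\varepsilon_i\in\{+,-\}$ split $u$ into four pieces $u_{++},u_{+-},u_{-+},u_{--}$, where $u_{\varepsilon_1\varepsilon_2}$ is holomorphic or antiholomorphic in the $i$-th variable according to $\varepsilon_i$. By M.~Riesz each $P_+$ is bounded on $L^p(\mathbb{T})$ for $1<p<\infty$, hence each projection is bounded on $L^p(\mathbb{T}^2)$ and $\|u_{\varepsilon_1\varepsilon_2}\|_p\lesssim\|u\|_p$. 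By the triangle inequality it then suffices to bound $\int|u_{\varepsilon_1\varepsilon_2}|^p\,d\mu$ by a constant times $\|u_{\varepsilon_1\varepsilon_2}\|_p^p$ for each of the four pieces.

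The key observation is that each piece can be routed either through $\mu$ or through $\sigma_*\mu$, using that Carleson estimates only see $|f|^p$ so complex conjugation is free. Indeed $u_{++}$ is holomorphic, so $\int|u_{++}|^p\,d\mu\lesssim\|u_{++}\|_p^p$ directly; and $\overline{u_{--}}$ is holomorphic with the same modulus, so $\int|u_{--}|^p\,d\mu=\int|\overline{u_{--}}|^p\,d\mu\lesssim\|u_{--}\|_p^p$. For the mixed pieces I would use the reflection $\sigma$. The function $u_{-+}$ is antiholomorphic in the first variable and holomorphic in the second, hence $u_{-+}=g\circ\sigma$ for a holomorphic $g$, and the change of variables defining $\sigma_*\mu$ gives $\int|u_{-+}|^p\,d\mu=\int|g|^p\,d\sigma_*\mu\lesssim\|g\|_p^p=\|u_{-+}\|_p^p$, where the last equality holds because $\sigma$ is a measure-preserving symmetry of $\mathbb{T}^2$. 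Finally $\overline{u_{+-}}$ is antiholomorphic in the first variable and holomorphic in the second, so the same argument applied to $\overline{u_{+-}}$ and $\sigma_*\mu$ controls $\int|u_{+-}|^p\,d\mu$. Summing the four bounds and invoking the M.~Riesz estimates from the previous step yields $\int|u|^p\,d\mu\lesssim\|u\|_p^p$, completing the reduction.

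I expect the only genuinely delicate point to be the bookkeeping of the four spectral quadrants and the verification that conjugation together with the single reflection $\sigma$ suffices to turn every piece into a holomorphic function adapted to one of the two available measures; the analytic input (boundedness of the Riesz projection for $1<p<\infty$ and invariance of the $L^p(\mathbb{T}^2)$ norm under $\sigma$) is standard. It is worth noting that $\sigma$ reflects only the first variable, so antiholomorphy in the second variable must be removed by a global conjugation rather than by a second reflection, which is precisely what makes the two hypotheses that $\mu$ and $\sigma_*\mu$ be Carleson exactly sufficient.
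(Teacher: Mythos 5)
Your proposal is correct and follows essentially the same route as the paper: reduce to showing $\mu$ is Carleson for $h^p(\mathbb{D}^2)$, split $u$ into four spectral pieces via iterated Riesz projections (the paper packages this as a single weighted projection $\mathbb{P}$ applied to $u$, $\overline{u}$, $u\circ\sigma$, $\overline{u}\circ\sigma$), and route each piece through $\mu$ or $\sigma_*\mu$ using conjugation and the measure-preserving reflection $\sigma$. The only difference is notational bookkeeping of the decomposition, not the underlying argument.
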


\begin{proof}
Consider the  densely defined operator in $h^p(\mathbb{D}^2)$ given by the formula 
\begin{align*} \mathbb{P}\Big( \sum_{(n,m) \in \mathbb{Z}^2} a_{nm} r_1^nr_2^m e^{i(\theta_1n+ \theta_2 m)} \Big) & = \\ \frac14 a_{00} +  
\frac12 \sum_{n\in \mathbb{N}} a_{n0} r_1^n e^{i\theta_1 n} + \frac 12 \sum_{m\in \mathbb{N}} a_{0m}r_2^m e^{i\theta_2 m} & + \sum_{(n,m) \in \mathbb{N}^2} a_{nm} r_1^nr_2^m e^{i(\theta_1n+ \theta_2 m)}, \end{align*}
when $a_{nm}$ are all but finitely many equal to zero. 
	Due to the M. Riesz theorem this operator extends to a bounded operator from $h^p(\mathbb{D}^2)$ 
	onto $H^p(\mathbb{D}^2)$.
	Furthermore, for any $u \in h^p(\mathbb{D}^2)$ we have that 
	\[ u = \mathbb{P}u + \overline{\mathbb{P}\overline{u} } + (\mathbb{P}(u\circ \sigma) )\circ \sigma + \overline{  (\mathbb{P}(\overline{u}\circ \sigma)) \circ \sigma}.    \]
        In order to finish the  proof it remains to notice the obvious fact that $\sigma$ preserves the Lebesgue measure. Therefore for the third term for example we have 
	\begin{align*}
		\int_{\mathbb{D}^2}|(\mathbb{P}u\circ \sigma )\circ \sigma|^p d\mu & = \int_{\mathbb{D}^2} | \mathbb{P}u\circ \sigma |^p d\sigma_* \mu \\
		\leq & C \Vert \mathbb{P}( u \circ \sigma) \Vert^p_p \\
		\leq & C \Vert u \circ \sigma \Vert^p_p \\
		= & C \Vert u \Vert^p_p.
	\end{align*}
Here and in the sequel the constant $C$ might change from appearance to appearance. 

	Similarly all four terms are bounded by $\Vert u\Vert_p^p$ times a multiplicative   constant that depends only on $p$ and $\mu$. 
	Therefore $\mu $ is a Carleson measure for the separately harmonic Hardy space $h^p(\mathbb{D}^2)$ and hence it satisfies the Chang-Stein condition. 
\end{proof}
\begin{rem}
    A straightforward adaptation of the argument above yields the same result for all $d>2$. Namely, if $\mu$ is a measure on $\D^d$ such that $\mu, {\sigma_1}_*\mu,\dots, {\sigma_{d-1}}_*\mu$ are Carleson for $H^p(\D^d)$, then $\mu$ is Carleson for $h^p(\D^d)$. Here $\sigma_i$ denotes the map the conjugates the $i$-th variable. 
\end{rem}
Next we shall need another elementary observation. 

\begin{lem}
\label{lemma:interpolation_carleson}
	Let $1 < p < q < \infty$, then any Carleson measure for $H^p(\mathbb{D}^d)$ is a Carleson measure for $H^q(\mathbb{D}^d).$   
\end{lem}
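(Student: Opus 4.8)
The plan is to transport the Carleson estimate from the exponent $p$ to every larger exponent in two moves: an elementary ``powering'' trick that handles the integer multiples $kp$, followed by complex interpolation to fill the gaps. Throughout, let $M$ denote the Carleson constant of $\mu$ for $H^p(\mathbb{D}^d)$, so that $\int_{\mathbb{D}^d}|f|^p\,d\mu\le M\|f\|_p^p$ for all $f\in H^p(\mathbb{D}^d)$, and let $\iota$ denote the inclusion of a Hardy space into the corresponding $L^r(\mathbb{D}^d,\mu)$.

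\emph{Step 1 (integer multiples).} For each $k\in\mathbb{N}$ I would show that the Carleson estimate at exponent $kp$ holds with the same constant $M$. Indeed, if $g\in H^{kp}(\mathbb{D}^d)$ then $g^k$ is holomorphic and $\|g^k\|_p^p=\int_{\mathbb{T}^d}|g^*|^{kp}\,dm=\|g\|_{kp}^{kp}<\infty$, so $g^k\in H^p(\mathbb{D}^d)$; applying the hypothesis to $f=g^k$ gives
\[
\int_{\mathbb{D}^d}|g|^{kp}\,d\mu=\int_{\mathbb{D}^d}|g^k|^p\,d\mu\le M\|g^k\|_p^p=M\|g\|_{kp}^{kp}.
\]
Thus $\mu$ is Carleson for $H^{kp}(\mathbb{D}^d)$, with $\|\iota\|_{H^{kp}\to L^{kp}(\mu)}\le M^{1/(kp)}$; in particular this already settles every $q$ that is a multiple of $p$.

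\emph{Step 2 (interpolation).} Given $q\in(p,\infty)$ that is not a multiple of $p$, pick $k\ge1$ with $kp<q<(k+1)p$ and $\theta\in(0,1)$ with $1/q=(1-\theta)/(kp)+\theta/((k+1)p)$. I would then interpolate $\iota$ between the couples $\big(H^{kp}(\mathbb{D}^d),H^{(k+1)p}(\mathbb{D}^d)\big)$ and $\big(L^{kp}(\mathbb{D}^d,\mu),L^{(k+1)p}(\mathbb{D}^d,\mu)\big)$. Using the standard identifications $[L^{kp}(\mu),L^{(k+1)p}(\mu)]_\theta=L^{q}(\mu)$ and $[H^{kp}(\mathbb{D}^d),H^{(k+1)p}(\mathbb{D}^d)]_\theta=H^{q}(\mathbb{D}^d)$, Calder\'on's theorem yields $\|\iota\|_{H^{q}\to L^{q}(\mu)}\le \big(M^{1/(kp)}\big)^{1-\theta}\big(M^{1/((k+1)p)}\big)^{\theta}=M^{1/q}$, that is $\int_{\mathbb{D}^d}|g|^q\,d\mu\le M\|g\|_q^q$ for all $g\in H^q(\mathbb{D}^d)$, which is the desired conclusion.

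The polydisc interpolation identity $[H^{r_0},H^{r_1}]_\theta=H^{q}$ (for $1<r_0,r_1<\infty$) is where the real content sits, and it is the step I would be most careful with: it follows from the boundedness of the Riesz projection $\mathbb{P}$ on $L^{r}(\mathbb{T}^d)$ for $1<r<\infty$ (the same projection used in Lemma~\ref{lem:symmetric_carleson}), which exhibits $H^{r}(\mathbb{D}^d)$ as a complemented subspace of $L^{r}(\mathbb{T}^d)$, hence as a retract of the interpolation scale. I would also record the routine point that $\iota$ is a \emph{consistent} operator on these couples, since an $H^r$ function is determined by, and recovered from, its boundary values, so its restriction to $\mathrm{supp}\,\mu\subseteq\mathbb{D}^d$ is well defined independently of $r$. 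Finally I should stress that a bare H\"older inequality on the target $L^r(\mu)$ norms would not suffice, since it would leave the larger Hardy norm $\|g\|_{(k+1)p}$ on the right-hand side; it is precisely the interpolation of the \emph{domain} spaces that replaces it with $\|g\|_q$, and this is the reason complex interpolation (rather than elementary inequalities) is needed here.
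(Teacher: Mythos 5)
Your proposal is correct and follows essentially the same route as the paper's proof: the powering trick $\int_{\mathbb{D}^d}|g|^{kp}\,d\mu=\int_{\mathbb{D}^d}|g^k|^p\,d\mu$ to handle integer multiples of $p$, followed by complex interpolation of the Hardy scale, justified by the boundedness of the Riesz projection which realizes $H^r(\mathbb{D}^d)$ as a retract of $L^r(\mathbb{T}^d)$. The only cosmetic difference is that you interpolate between consecutive multiples $kp$ and $(k+1)p$, while the paper interpolates between $p$ and a single large multiple $np$; your added remarks on consistency of the inclusion and on why H\"older alone would not suffice are accurate but not needed beyond what the paper records.
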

\begin{proof}
	Notice that if $0<\theta<1$ and we denote by $(H^p(\mathbb{D}^d),H^q(\mathbb{D}^d))_\theta$ the  $\theta$ interpolating space constructed with the complex method, it coincides, with equivalence of norms with the Hardy space $ H^s(\mathbb{D}^d) $ where $(1-\theta) p^{-1}+ \theta q^{-1}=s^{-1}$. That is because the compatible pair of Hardy spaces can be identified isometrically with a compatible pair of subspaces of $L^1(\mathbb{T}^d,m)$, and because the Riesz projection on the first quadrant is a bounded operator on all $L^p(\mathbb{T}^d,m)$ for $p>1$.

	To finish the proof if suffices to notice that if under the hypothesis of the theorem, and for any $n\in \mathbb{N}$ we have 
	\[ \int_{\mathbb{D}^d}|f|^{np}d\mu = \int_{\mathbb{D}^d}|f^n|^p d\mu \leq C_\mu \Vert f\Vert^{np}_{np}. \]
	Consequently, the identity operator $\id : H^{np}(\mathbb{D}^d)\to L^{np}(\mathbb{D}^d,\mu) $ is bounded for every $n\in \mathbb{N}$ and therefore by interpolation it is bounded for ever $q>p.$   
\end{proof}

 \begin{coro}
\label{coro:carlesonmeasures}
    A sequence $\Lambda =(\lambda_n)_n$ in the bidisc is Carleson for $h^p(\mathbb{D}^2)$ if and only if both $Z$ and $\sigma(\Lambda):=(\sigma(\lambda_n))_n$ are Carleson sequences for $H^p(\mathbb{D}^2)$. In particular, if $\sigma(\Lambda)= \Lambda$ and $q, p>1$, then $\Lambda$ is a Carleson measure for $H^p(\mathbb{D}^2)$ if and only if it is a Carleson measure for $H^q(\mathbb{D}^2)$.

\end{coro}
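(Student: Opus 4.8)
The plan is to translate the statement about sequences into one about the atomic measures $\mu_\Lambda$ defined in \eqref{eqn:muLambda}, and then assemble the pieces already in place. The first thing to record is that $\sigma$ conjugates only the first coordinate, while the weights $\prod_{i=1}^2(1-|\lambda_n^i|^2)$ are invariant under conjugation of either variable; consequently the atomic measure attached to $\sigma(\Lambda)$ is exactly the pushforward, namely $\mu_{\sigma(\Lambda)}=\sigma_*\mu_\Lambda$. I will also use repeatedly the inclusion $H^p(\mathbb{D}^2)\subseteq h^p(\mathbb{D}^2)$, so that every Carleson measure for the separately harmonic space is automatically Carleson for the holomorphic one.

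For the forward implication, suppose $\mu_\Lambda$ is Carleson for $h^p(\mathbb{D}^2)$. By the Chang--Stein theorem this is equivalent to the open set condition $\mu_\Lambda(U)\le C|U|$ for all open $U\subseteq \mathbb{T}^2$. Since $\sigma$ preserves Lebesgue measure and maps boxes to boxes, the pushforward $\sigma_*\mu_\Lambda$ satisfies the very same condition with the same constant; hence $\mu_{\sigma(\Lambda)}$ is Carleson for $h^p(\mathbb{D}^2)$ as well. By the inclusion above, both $\Lambda$ and $\sigma(\Lambda)$ are then Carleson for $H^p(\mathbb{D}^2)$.

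For the converse, suppose both $\Lambda$ and $\sigma(\Lambda)$ are Carleson for $H^p(\mathbb{D}^2)$; equivalently, $\mu_\Lambda$ and $\sigma_*\mu_\Lambda=\mu_{\sigma(\Lambda)}$ are both Carleson measures for $H^p(\mathbb{D}^2)$. This is precisely the hypothesis of Lemma \ref{lem:symmetric_carleson}, whose conclusion is that $\mu_\Lambda$ satisfies the Chang--Stein condition, i.e.\ $\Lambda$ is Carleson for $h^p(\mathbb{D}^2)$. This settles the first assertion of the corollary.

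For the \emph{in particular} clause, assume $\sigma(\Lambda)=\Lambda$. Then $\sigma(\Lambda)$ is Carleson for $H^p$ as soon as $\Lambda$ is, so the equivalence just proved collapses to: $\Lambda$ is Carleson for $H^p(\mathbb{D}^2)$ if and only if it is Carleson for $h^p(\mathbb{D}^2)$. Because the Chang--Stein condition characterizing $h^p$-Carleson measures is independent of the exponent for $p\in(1,\infty)$, the $h^p$- and $h^q$-Carleson properties coincide, and chaining the two equivalences yields that $\Lambda$ is Carleson for $H^p(\mathbb{D}^2)$ if and only if it is Carleson for $H^q(\mathbb{D}^2)$. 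The only step that carries real content is the reverse implication, which rests entirely on Lemma \ref{lem:symmetric_carleson}; I expect this to be the crux, while the remaining points---the identification $\sigma_*\mu_\Lambda=\mu_{\sigma(\Lambda)}$, the $\sigma$-invariance of the Chang--Stein condition, and the inclusion $H^p\subseteq h^p$---are essentially bookkeeping.
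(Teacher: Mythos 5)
Your proof is correct and assembles the ingredients exactly as the paper intends: the identification $\mu_{\sigma(\Lambda)}=\sigma_*\mu_\Lambda$, the inclusion $H^p(\mathbb{D}^2)\subseteq h^p(\mathbb{D}^2)$ and the $\sigma$-invariance of the Chang--Stein condition for the forward direction, and Lemma \ref{lem:symmetric_carleson} for the converse, with the $p$-independence of the Chang--Stein condition giving the final clause. The paper leaves the corollary without a written proof, but this is precisely the intended argument.
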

\begin{rem}[Random Carleson Sequences]
   The Kolmogorov  $0-1$ law for random Carleson sequences for $H^2(\mathbb{D}^d)$ was described in \cite{Chalmoukis2024}, where a random sequence in the polydisc is chosen by picking deterministic radii and independent random arguments distributed uniformly on $\mathbb{T}^d$. Since the distribution law of such random arguments is unchanged under the transformation $\sigma$,  Carleson sequences for the separately harmonic and holomorphic Hardy spaces coincide for all $d$ almost surely.
\end{rem}

\section{Modified Szeg\"o kernels in the bidisc}
\label{sec:FangXia}
In this section we will develop the idea which is necessary in order to show that $\Lambda_0$ in Theorem \ref{thm:main} is simply interpolating for all $1\le p\le2$.

  For our purposes it is convenient to consider a simple modification of the Szegö kernel $S$ in \eqref{eqn:szego}. 
Given a parameter $t>0$ and $w\in \mathbb{D}^2$ we define 
\[ \psi_{w,t}:=\frac{S_w^{1+t}}{\Vert S_w\Vert_2^{1+2t}}. \]
Almost orthogonality properties for collections of such modified Szegö kernels have been studied in \cite{FangXia15} in the setting of the Drury-Arveson space on the unit ball. In particular, as a corollary of \cite[Lemma 5.1]{FangXia15} one deduces the following result for the Hardy space on the polydisc, which can also be obtained by direct computation.
\begin{lem} \label{prop:psi_estimate}
	For all $t>0$ there exists a constant $C=C(d, t)>0$ such that 
	\[ | \langle \psi_{z,t}, \psi_{w,t} \rangle| \leq C| \langle g_z , g_w \rangle |^{1+t} \]
\end{lem}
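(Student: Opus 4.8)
The plan is to reduce the whole inequality to a single-variable growth estimate by exploiting the product structure of the Szegő kernel, and then to control the resulting power series near its only boundary singularity.

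\emph{Reduction to one variable.} First I would record both inner products explicitly. Since $S_w^{1+t}(\zeta)=\prod_{i=1}^d(1-\overline{w^i}\zeta^i)^{-(1+t)}$ and $(1-\overline{w^i}\zeta^i)^{-(1+t)}=\sum_{n\ge0}\binom{n+t}{n}\overline{w^i}^{\,n}(\zeta^i)^n$, orthonormality of the monomials $\{\zeta^\alpha\}$ in $H^2(\D^d)$ gives $\langle S_z^{1+t},S_w^{1+t}\rangle=\prod_{i=1}^d F_t(\overline{z^i}w^i)$, where $F_t(a):=\sum_{n\ge0}\binom{n+t}{n}^2a^n$. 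Dividing by $\Vert S_z\Vert_2^{1+2t}\Vert S_w\Vert_2^{1+2t}=\prod_i(1-|z^i|^2)^{-(1+2t)/2}(1-|w^i|^2)^{-(1+2t)/2}$ yields
\[
\langle\psi_{z,t},\psi_{w,t}\rangle=\prod_{i=1}^d(1-|z^i|^2)^{\frac{1+2t}2}(1-|w^i|^2)^{\frac{1+2t}2}F_t(\overline{z^i}w^i),
\]
while $|\langle g_z,g_w\rangle|^{1+t}=\prod_{i=1}^d\big((1-|z^i|^2)^{1/2}(1-|w^i|^2)^{1/2}/|1-\overline{z^i}w^i|\big)^{1+t}$. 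Both sides factor over the coordinates, so it suffices to prove the claim for $d=1$ with a constant $c_t$, taking $C=c_t^{\,d}$.

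\emph{Reduction to a growth bound for $F_t$.} Writing $z,w$ for single variables, the $d=1$ inequality is equivalent to $|F_t(\overline z w)|\le c_t\,(1-|z|^2)^{-t/2}(1-|w|^2)^{-t/2}|1-\overline z w|^{-(1+t)}$. I would deduce this from the single clean estimate
\[
|F_t(a)|\le c_t\,|1-a|^{-(1+2t)},\qquad a\in\D,
\]
together with the elementary inequality $(1-|z|^2)(1-|w|^2)\le|1-\overline z w|^2$, which follows from $|1-\overline z w|^2\ge(1-|z||w|)^2\ge(1-|z|^2)(1-|w|^2)$. Indeed, with $a=\overline z w$ the latter gives $|1-a|^{-t}\le(1-|z|^2)^{-t/2}(1-|w|^2)^{-t/2}$, and splitting $|1-a|^{-(1+2t)}=|1-a|^{-t}|1-a|^{-(1+t)}$ produces exactly the required right-hand side.

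\emph{The growth bound and the main obstacle.} The crux is the one-variable estimate $|F_t(a)|\le c_t|1-a|^{-(1+2t)}$. The naive bound $|F_t(a)|\le\sum_n\binom{n+t}{n}^2|a|^n\le c_t(1-|a|)^{-(1+2t)}$ (using $\binom{n+t}{n}^2\le c_t\binom{n+2t}{n}$, valid since $\binom{n+t}{n}^2/\binom{n+2t}{n}\to\Gamma(1+2t)/\Gamma(1+t)^2$) is too weak: it sees $1-|a|$ rather than $|1-a|$, whereas $F_t$ is holomorphic across the whole boundary except at $a=1$, so the cancellation in the series must be exploited. I plan to do this by peeling off leading singularities. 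Using the Stirling expansions $\binom{n+t}{n}^2=n^{2t}\sum_{j\ge0}\beta_j n^{-j}$ and $\binom{n+s}{n}=\tfrac{n^{s}}{\Gamma(1+s)}\sum_{j\ge0}\gamma_j(s)n^{-j}$, I solve triangularly for constants $\lambda_0,\dots,\lambda_{N-1}$ so that $\binom{n+t}{n}^2-\sum_{j=0}^{N-1}\lambda_j\binom{n+2t-j}{n}=O(n^{2t-N})$; choosing $N>2t+1$ makes the remainder series absolutely summable, hence bounded and continuous on $\overline{\D}$. This gives $F_t(a)=\sum_{j=0}^{N-1}\lambda_j(1-a)^{-(1+2t-j)}+O(1)$, and since $|1-a|^{-(1+2t-j)}\le 2^{\,j}|1-a|^{-(1+2t)}$ on $\D$, the bound follows. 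The hard part is exactly this passage from the coefficient growth $\binom{n+t}{n}^2\asymp n^{2t}$ to the sharp boundary estimate $|1-a|^{-(1+2t)}$; equivalently one may identify $F_t={}_2F_1(1+t,1+t;1;\,\cdot\,)$ and invoke its known behaviour near $a=1$, where the exponent $-(1+2t)$ is dominant and the logarithmic terms that occur when $1+2t\in\N$ attach only to the subdominant (bounded) solution and hence do not spoil the estimate.
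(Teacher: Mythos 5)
Your proof is correct. There is, however, no argument in the paper to compare it with step by step: the authors do not prove this lemma but only cite Lemma 5.1 of Fang and Xia (stated for modified kernels of the Drury--Arveson space on the ball) and remark that the polydisc case ``can also be obtained by direct computation.'' Your proposal supplies exactly that computation. The two reductions are complete: orthonormality of the monomials gives the factorization of $\langle\psi_{z,t},\psi_{w,t}\rangle$ over the coordinates, and the inequality $(1-|z|^2)(1-|w|^2)\le|1-\overline{z}w|^2$ correctly upgrades the one-variable bound $|F_t(a)|\le c_t|1-a|^{-(1+2t)}$, with $F_t={}_2F_1(1+t,1+t;1;\cdot)$, to the stated estimate; you are also right that the naive bound $(1-|a|)^{-(1+2t)}$ genuinely fails (take $w=-z$ with $|z|\to1$, where $|1-\overline{z}w|\approx2$ but $1-|a|\to0$), so the regularity of $F_t$ on $\overline{\D}\setminus\{1\}$ must be exploited. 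The peeling identity $\binom{n+t}{n}^2=\sum_{j=0}^{N-1}\lambda_j\binom{n+2t-j}{n}+O(n^{2t-N})$ with $N>2t+1$ does this, since each $\sum_n\binom{n+2t-j}{n}a^n=(1-a)^{-(1+2t-j)}$ and $|1-a|^{-(1+2t-j)}\le2^j|1-a|^{-(1+2t)}$ while the remainder is bounded. The only step worth an extra line is the case $2t\in\N$: there $\Gamma(1+2t-j)$ has a pole at $j=2t+1$, the triangular system cannot be solved with a pure binomial term at that level, and one instead matches the $n^{-1}$ layer by a multiple of $\sum_{n\ge1}n^{-1}a^n=-\log(1-a)$ --- which, as you observe in the hypergeometric formulation, still satisfies $|\log(1-a)|\le c_t|1-a|^{-(1+2t)}$ on $\D$ because $1+2t>0$. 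With that sentence added the argument is airtight, and it has the advantage over the bare citation of being self-contained and of exhibiting the constant explicitly as $C=c_t^{\,d}$.
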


The idea that will guide us from this point on is that the sequence $\Lambda_0$ in Theorem \ref{thm:main} is going to be constructed as a union of finite collections of points which are sufficiently disjoint from each other so that the properties of one collection do not essentially affect the properties of the other.
Furthermore, each such finite collection of points will have uniformly controlled simply interpolating constant while it is going to have increasingly large Carleson constant.

In order to do so, we will show that if our sequence is of a very particular form, then we can use the modified kernels $\psi_{z,t}$ in order to construct an approximate right inverse of the restriction operator $T^p_\Lambda$. If the approximation is good enough a Neumann series argument allows us to find an exact right inverse of the restriction operator with good control of its norm.

Our next proposition will contain the basic idea of our construction. 
It will be necessary to introduce some more notation before discussing it. 
If $\Lambda=(\lambda_n)$ is a sequence of points in the bidisc we will write $g_n$ instead of $g_{\lambda_n}$ for brevity, where $g_{\lambda_n}$ is the normalized Szegö kernel from \eqref{eqn:normalized_Szego}. Furthermore, the matrix $G = \langle g_n, g_m \rangle$ will be called the {\it Gram matrix} of the sequence or the {\it Gramian} of $\Lambda.$

It can be shown \cite[Chapter 9]{AM02} that a sequence is weakly separated if and only if $\sup_{n\neq m} |\langle g_n, g_m  \rangle| < 1$. 
We shall call the quantity
\[
	\gamma = \sqrt{1-\sup_{n\neq m}|\langle g_n, g_m \rangle|^2} > 0
\]
 the constant of weak separation. 
A related notion,  is that of {\it column boundedness} of a sequence. 
$\Lambda $ is called column bounded if the columns of the corresponding Gramian form a bounded set in $\ell^2(\mathbb{N}).$ 
As before the constant   
\[ \Delta := \sup_m \sum_{n:n\neq m}^{\infty} |\langle g_n, g_m \rangle|^2 < + \infty, \]
will be called the column boundedness constant of $\Lambda.$ 
We will say that a sequence is $\gamma$ weakly separated if the weak separation constant of the sequence is less or equal to $\gamma $. The phrase $\Delta$ column bounded has a similar meaning.

This leads us to the following proposition.

\begin{prop}
\label{prop:bound_inv}
	For all $\gamma, \Delta>0$ there exists a constant $C=C(\gamma,\Delta)>0$ such that, if $1\le p \le2$ and $ \Lambda \subseteq \mathbb{D}^2$ is a finite collection of points which is  $\gamma$  weakly separated and $\Delta$ column bounded  and also satisfies 
	\[ \Vert S_\lambda \Vert_2 = \text{constant on} \,\,\, \Lambda, \]
	then there exists a bounded linear operator $R: \ell^p \to H^p(\mathbb{D}^2)	 $ which is a right inverse of $T_\Lambda^p: H^p(\mathbb{D}^2) \to \ell^p $ and $\Vert R\Vert_{\ell^p\to H^p(\mathbb{D}^2)}\leq C. $   
\end{prop}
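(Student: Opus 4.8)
The plan is to build an explicit \emph{approximate} right inverse out of the modified kernels and then correct it by a Neumann series. Write $\rho:=\Vert S_\lambda\Vert_2$ for the common value of the norm on $\Lambda$, put $\psi_m:=\psi_{\lambda_m,t}$ for a parameter $t\ge1$ to be chosen, and define
\[
R_0(a):=\rho^{\frac2p-1}\sum_m a_m\,\psi_m,\qquad a=(a_m)_m\in\ell^p.
\]
Using $f(\lambda_n)=\langle f,S_{\lambda_n}\rangle$, the identity $S(\lambda_n,\lambda_m)=\rho^2\,\langle g_m,g_n\rangle$ (this is the one place where the hypothesis $\Vert S_\lambda\Vert_2\equiv\rho$ enters), and $\prod_{i}(1-|\lambda_n^i|^2)^{1/p}=\rho^{-2/p}$, a direct computation gives
\[
T_\Lambda^p(\psi_m)_n=\rho^{\,1-\frac2p}\,\langle g_m,g_n\rangle^{1+t},
\]
so that $T_\Lambda^p R_0=\id+E$ on $\ell^p$, where $E$ is the matrix with $E_{nm}=\langle g_m,g_n\rangle^{1+t}$ off the diagonal and $E_{nn}=0$. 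Observe that $\rho$ has cancelled out of $\id+E$ and that $|E_{nm}|=|E_{mn}|=|\langle g_m,g_n\rangle|^{1+t}$; preserving this symmetry is the real purpose of the constant-norm assumption.

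Next I would run the Neumann series. Since $|E_{nm}|$ is symmetric, Schur's test bounds $\Vert E\Vert_{\ell^p\to\ell^p}$, for every $1\le p\le2$, by the common row/column sum
\[
\sup_n\sum_{m\neq n}|\langle g_m,g_n\rangle|^{1+t}\le s^{\,t-1}\sup_n\sum_{m\neq n}|\langle g_m,g_n\rangle|^2\le s^{\,t-1}\Delta,
\]
where $s:=\sup_{n\neq m}|\langle g_n,g_m\rangle|<1$ is the quantity controlled by the weak separation constant $\gamma$, the middle inequality uses $t\ge1$, and the last step is column boundedness. Choosing $t=t(\gamma,\Delta)\ge1$ so large that $s^{\,t-1}\Delta\le\tfrac12$ makes $\id+E$ boundedly invertible on $\ell^p$ with $\Vert(\id+E)^{-1}\Vert\le2$, and fixes $t$ once and for all. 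Then $R:=R_0(\id+E)^{-1}$ satisfies $T_\Lambda^p R=\id_{\ell^p}$, so it only remains to bound $\Vert R_0\Vert_{\ell^p\to H^p(\D^2)}$ by a constant depending on $\gamma,\Delta$ alone.

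The hard part is precisely this last bound, because the weight $\rho^{2/p-1}$ blows up as $\Lambda$ approaches $\T^2$, and the cheap estimate $\Vert R_0(a)\Vert_p\le\Vert R_0(a)\Vert_2$ (valid since $p\le2$), combined with the Gramian bound for the $\psi_m$ coming from Lemma~\ref{prop:psi_estimate}, carries exactly the spurious factor $\rho^{2/p-1}$. To remove it I would avoid $H^2$ and instead interpolate the $p$-independent synthesis operator $V(a):=\sum_m a_m\psi_m$ between its two natural endpoints. At the $L^2$ endpoint, Lemma~\ref{prop:psi_estimate} together with weak separation and column boundedness shows the Gramian $(\langle\psi_m,\psi_n\rangle)_{m,n}$ is bounded on $\ell^2$ with a $\rho$-independent norm, whence $\Vert V\Vert_{\ell^2\to L^2(\T^2)}\le C$. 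At the $L^1$ endpoint the triangle inequality gives $\Vert V(a)\Vert_{L^1(\T^2)}\le\sum_m|a_m|\,\Vert\psi_m\Vert_1$, and evaluating the one-variable integrals $\int_\T|1-\overline{w}z|^{-(1+t)}\,dm\asymp(1-|w|^2)^{-t}$ in each coordinate yields $\Vert\psi_m\Vert_1\asymp\rho^{-1}$, so $\Vert V\Vert_{\ell^1\to L^1(\T^2)}\le C\rho^{-1}$. Because $V(a)$ is holomorphic one has $\Vert V(a)\Vert_{H^p(\D^2)}=\Vert V(a)\Vert_{L^p(\T^2)}$, so Riesz--Thorin between $(\ell^1,\ell^2)$ and $(L^1(\T^2),L^2(\T^2))$ gives
\[
\Vert V\Vert_{\ell^p\to H^p(\D^2)}\le\big(C\rho^{-1}\big)^{1-\theta}C^{\theta}=C\,\rho^{\,1-\frac2p},\qquad 1-\theta=\tfrac2p-1.
\]
Multiplying by the weight, $\Vert R_0\Vert_{\ell^p\to H^p(\D^2)}=\rho^{2/p-1}\Vert V\Vert_{\ell^p\to H^p(\D^2)}\le C$, the powers of $\rho$ cancelling exactly. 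Together with $\Vert(\id+E)^{-1}\Vert\le2$ this yields $\Vert R\Vert\le 2C$ with $C=C(\gamma,\Delta)$, completing the construction.
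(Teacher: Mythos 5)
Your proof is correct and takes essentially the same route as the paper's: an approximate right inverse built from the modified kernels $\psi_{\lambda,t}$, the almost-orthogonality of Lemma \ref{prop:psi_estimate} plus column boundedness at the $\ell^2\to H^2$ endpoint, the bound $\Vert\psi_{\lambda,t}\Vert_1\le C\Vert S_\lambda\Vert_2^{-1}$ at the $\ell^1\to H^1$ endpoint, Riesz--Thorin to cancel the weight $\rho^{2/p-1}$, and a Neumann series after choosing $t=t(\gamma,\Delta)$ so that the off-diagonal error matrix has norm at most $1/2$. The only cosmetic differences are that you normalize the synthesis operator by $\rho^{2/p-1}$ upfront and invoke Schur's test for the error matrix where the paper uses Riesz interpolation.
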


\begin{proof}
 Let $t>0$ to be determined later and notice that 
 \begin{equation}\label{eq:power_column_bound}
		\sum_{n:n\neq m} | \langle g_n, g_m \rangle|^{1+t} \leq (1-\gamma^2)^{\frac{t-1}{2}}\sum_{n:n\neq m} | \langle g_n, g_m \rangle|^2 \leq \Delta (1-\gamma^2)^{\frac{t-1}{2}}.  
	\end{equation}
	Since no confusion arises, we will continue to denote by $\ell^p$ the space $\mathbb{C}^N$, where $N$ is the number of points of the sequence $\Lambda$, equipped with the $\ell^p$ norm. 
	Then we consider the operator $B_t:\ell^p \to H^p(\mathbb{D}^2)	$ defined as 
	\[ B_t(a) = \sum_{n=1}^{N} a_n \psi_{n,t}, \,\, a=(a_1,\dots,a_N)\in \ell^p,  \]
where $ \psi_{n,t}= \psi_{\lambda_n,t}.$ 

Let us start by estimating $\Vert B_t\Vert_{\ell^2 \to H^2(\mathbb{D}^2)} $. We apply first Lemma \ref{prop:psi_estimate} and subsequently Cauchy-Schwarz and the estimate \eqref{eq:power_column_bound} in order to get that, for some $C$ depending on $t$, we have 
\begin{align*} \Vert B_t(a)\Vert^2_{2}  & = \sum_{n,m=1}^N a_n \overline{a_m} \langle \psi_{n,t}, \psi_{m,t} \rangle \\
	& \leq C \sum_{n,m=1}^N |a_n a_m| |\langle g_n, g_m \rangle|^{1+t} \\
	& \leq C \sup_{m} \Big( \sum_{n=1}^{\infty} | \langle g_n, g_m \rangle|^{1+t} \Big)  \Vert a\Vert_{\ell^2}^2 \\ 
	& \leq C(1+\Delta (1-\gamma^2)^{\frac{t-1}{2}}) \Vert a\Vert_{\ell^2}^2.
 \end{align*}

 In order to estimate  $\| B_t\|_{\ell^1 \to H^1(\mathbb{D}^2)}$ set $s:=p/2(1+t)-1$ and notice that, $s>0$ for all $t>1$ and that

\begin{equation}
\begin{split}
\label{eq:pnorm_psit}\Vert \psi_{z,t} \Vert_p^p  &=\|\psi_{z, t}^\frac{p}{2}\|^2_2\\
&=\left\|\frac{S_z^{1+s}}{\|S_z\|_2^{\frac{p}{2}(1+2t)}}\right\|_2^2\\
&=\|\psi_{z, s}\|^2_2\cdot\|S_z\|^{-p(1+2t)+2+4s}\\
&\leq C\|S_z\|_2^{p-2}
\end{split}
\end{equation}

thanks to Lemma \ref{prop:psi_estimate}, $z=w$. In particular for $a\in \ell^1$
\[ \Vert B_t(a)\Vert_{\ell^1} \leq \Vert a\Vert_{\ell^1} \sup_{1\leq n \leq N} \Vert \psi_{n,t}\Vert_{1}  \leq C \|S_{\lambda_1}\|^{-1}_2 \Vert a\Vert_{\ell^1},  \]
since $\|S_{\lambda_n}\|_2$ is constant on $\Lambda$. Notice that by considering $H^p(\mathbb{D}^2)$ as a subspace of $L^p(\mathbb{T}^2,m)$ we can apply the Riesz-Thorin interpolation theorem to conclude that $\Vert B_t\Vert_{\ell^p \to H^p(\mathbb{D}^2)} \leq C\|S_{\lambda_1}\|_2^{1-\frac{2}{p}}$, where $C$ depends only on $\Delta, \gamma$ and $t.$   

Next we consider the operator $T_\Lambda^p B_t : \ell^p \to \ell^p$, which for $a\in \ell^p$ acts as follows 
\begin{align*}
	T_\Lambda^p B_t (a) & = T_\Lambda^p \Big ( \sum_{n=1}^N a_n \psi_{n,t} \Big) \\
			    & = \Big( \sum_{n=1}^N a_n \psi_{n,t}(\lambda_m) \Vert S_{\lambda_m}\Vert_2^{-\frac2p} \Big)_{m=1}^N  \\
			    & =  \Big( \sum_{n=1}^N a_n \frac{S_{\lambda_n}(\lambda_m)^{1+t}}{\Vert S_{\lambda_n}\Vert_2^{1+2t+\frac2p}}   \Big)_{m=1}^N \\
			    & =  \Big( \Vert S_{\lambda_1}\Vert_2^{1-\frac2p} \sum_{n=1}^N a_n \langle g_n, g_m \rangle^{1+t}  \Big)_{m=1}^N,
\end{align*}
where we have used the fact that the norm of the Szeg\"o kernel vectors is constant on the sequence.  

Therefore we conclude that 
\[
\Vert \Vert S_{\lambda_1}\Vert_2^{\frac2p-1} T_\Lambda^p B_t - Id \Vert_{\ell^p\to \ell^p } \leq \|(\langle g_n, g_m\rangle^{1+t})_{n, m=1}^N\|_{\ell^p\to\ell^p}\leq \Delta(1-\gamma^2)^{\frac{t-1}{2}} 
\]
via an applicarion of \eqref{eq:power_column_bound} and Riesz's interpolation theorem.
Choosing $t>0$ such that the latter expression equals $ \frac12 $ we conclude that the operator $T_\Lambda^pB_t $ is invertible on $\ell^p $. Moreover, the norm of its inverse is controlled by $\|S_{\lambda_1}\|^{\frac{2}{p}-1}_{2}$, up to a constant that is independent of $p$. Hence $R : = B_t(T_\Lambda^p B_t)^{-1} : \ell^p \to H^p(\mathbb{D}^2) $ is a right inverse of $T_\Lambda^p  $ and
\[ \Vert R \Vert_{\ell^p \to H^p(\mathbb{D}^2)} \leq \Vert B_t\Vert_{\ell^p \to H^p(\mathbb{D}^2)} \Vert (T_\Lambda B_t)^{-1}\Vert_{\ell^p\to \ell^p}\leq C, \]
where $C$ does not depend on $p$. This concludes the proof.
\end{proof}
\section{Carleson's quilt construction revisited}\label{sec:Carlesons_Quilt}

In this section we are going to provide the second main tool for the proof of Theorem \ref{thm:main}, by using Carleson's example of a measure on the bidisc which satisfies the one box condition 
but it is not Carleson for $h^p(\mathbb{D}^2) $ for any $p>1 $. 

First we shall describe briefly Carleson's construction. For convenience we will identify $\mathbb{T}^2 $  with $(\mathbb{R}/\mathbb{Z})^2 $. In these coordinates the map $\sigma $ is give by $\sigma(\theta_1, \theta_2) = (1-\theta_1,\theta_2). $  We will generally follow the notation and terminology as in \cite{Tao} but also one can consult the original exposition \cite{Carleson74}.

For us  an interval $ I \subseteq (\mathbb{R}/\mathbb{Z})^2 $ is dyadic if its end points are consecutive points of the set 
$\mathcal{D}: = \{ \frac{j}{2^N}: j\in \mathbb{Z}, N \in \mathbb{N}\cup \{0\} \}$. 
Let $\mathcal{R} $ be a finite collection of dyadic 
 rectangles in the unit square $ (0,1)^2 $ such that 
 \begin{equation}\sum_{R \in \mathcal{R}} |R| = 1  \end{equation}
and also 
\begin{equation} \label{eq:quilt_property}
	\sum_{R\in \mathcal{R}: R\subseteq Q}|R| \leq |Q|
\end{equation}
for every dyadic  rectangle $ Q \subseteq \mathbb{T}^2. $
We will call such a collection a {\it quilt} \cite[Section 7]{Tao}.
The total area of the quilt is simply $\Big| \bigcup_{R\in \mathcal{R}} R \Big| $. We will say that a quilt is $\sigma $ invariant
if $\mathcal{R} = \{ \sigma(R) : R \in \mathcal{R}\} $ and that it is equiareal if every $ R\in \mathcal{R} $ has the same 
area.

The singleton $ \{ [0,1)^2 \} $ is trivially a quilt, but it is not obvious that quilts with arbitrarily small area exist. This is the content of the following theorem. 

\begin{thm}[Carleson \cite{Carleson74}]
	Given $\varepsilon > 0 $  there exists an equiareal quilt of total area smaller than $\varepsilon. $ 
\end{thm}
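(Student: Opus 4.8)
The plan is to reduce the theorem to an explicit self-similar construction and to arrange that the total area shrinks geometrically under iteration. Note first that equiareality together with $\sum_{R\in\mathcal{R}}|R|=1$ forces a quilt whose rectangles have common area $2^{-n}$ to consist of exactly $2^n$ rectangles; the only freedom is their shapes and positions, the admissible shapes at this scale being the $n+1$ aspect ratios $2^{-a}\times 2^{-(n-a)}$ for $a=0,\dots,n$. Before constructing anything I would isolate the tension between the two defining properties of a quilt. The packing inequality \eqref{eq:quilt_property} caps the total area of rectangles \emph{contained} in any dyadic box, so to respect it one wants to spread the rectangles evenly across all $n+1$ aspect ratios, arranging that each single aspect ratio occupies density at most $\tfrac{1}{n+1}$ inside every box: a box $Q$ then receives from all shapes combined at most $(n+1)\cdot\tfrac{1}{n+1}|Q|=|Q|$, which is exactly \eqref{eq:quilt_property}. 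The opposite pressure comes from the total area $\big|\bigcup_{R\in\mathcal{R}}R\big|$: to make it small the rectangles of different aspect ratios must be forced to overlap heavily, piling onto a common thin subset of the square rather than tiling it. In particular the union cannot be confined to any small dyadic box, since the packing inequality applied to such a box would cap the total mass below $1$; it must instead be a thin but widely spread set.

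The engine that produces arbitrarily small area is a substitution scheme. I would fix a single \emph{base} quilt $\mathcal{R}_0$, of rectangle-area $\beta$ and total area $\theta<1$, and given any quilt $\mathcal{R}$ substitute into each $R\in\mathcal{R}$ an affine dyadic copy of the base through the area-$|R|$ map $\Phi_R\colon[0,1)^2\to R$ that sends the dyadic grid to the dyadic grid of $R$. The substituted collection $\mathcal{R}'=\{\Phi_R(B):R\in\mathcal{R},\ B\in\mathcal{R}_0\}$ is again equiareal, and $\sum_{R'\in\mathcal{R}'}|R'|=\sum_{R}|R|\sum_{B}|B|=1$ is automatic from $|\Phi_R(B)|=|R|\,|B|$. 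Iterating $k$ times starting from $\{[0,1)^2\}$ yields an equiareal quilt of rectangle-area $\beta^k$ whose union satisfies $U^{(k)}=\bigcup_{B\in\mathcal{R}_0}\Phi_B\big(U^{(k-1)}\big)$, where $U^{(k-1)}$ is the union at the previous stage. If the base is designed so that the fractional patterns $\Phi_B(U^{(k-1)})$ \emph{align} on the overlaps $B\cap B'$, then the union area multiplies by $\theta$ at each step, giving $|U^{(k)}|\le\theta^k\to 0$; choosing $k$ so large that $\theta^k<\varepsilon$ finishes the proof.

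Two verifications carry all the weight, and I expect them to be the main obstacle. First, the packing inequality must be \emph{inherited} under substitution: for a box $Q$ lying inside the image of a single base rectangle this is immediate, since pulling back by $\Phi_R$ reduces it to base packing and rescaling restores the factor $|R|$; but because the base rectangles overlap, a box can lie inside several of them simultaneously, so one must control the \emph{overlap multiplicity} of $\mathcal{R}_0$ to keep the summed contribution below $|Q|$. Second, one must actually exhibit a base quilt with $\theta<1$ whose self-overlaps are \emph{compatible}, i.e.\ such that substituting the base into itself reproduces the same pattern on each overlap and hence yields genuine geometric decay of the area rather than the trivial monotone bound $|U^{(k)}|\le|U^{(k-1)}|$. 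Designing $\mathcal{R}_0$ to meet these two demands at once — bounded overlap multiplicity, to preserve packing, together with aligned overlaps, to force area decay — is precisely the heart of Carleson's example; the remaining ingredients (equiareality, the normalization $\sum_{R}|R|=1$, and the affine bookkeeping for $\Phi_R$) are routine.
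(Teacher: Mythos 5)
Note first that the paper does not prove this statement: it is quoted from Carleson's 1974 paper, and the only argument the authors supply is the one-line observation that equiareality is automatic because Carleson's rectangles arise from iterating linear maps of constant determinant on $[0,1)^2$. So your proposal must be measured against Carleson's construction itself, and there it falls short of being a proof. What you have written is a reduction scheme: \emph{if} there exists a base quilt $\mathcal{R}_0$ with total union area $\theta<1$, \emph{and if} the substitution $\mathcal{R}\mapsto\{\Phi_R(B):R\in\mathcal{R},\,B\in\mathcal{R}_0\}$ preserves the packing inequality \eqref{eq:quilt_property} and multiplies the union area by $\theta$, then iteration gives area $\theta^k\to0$. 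Both hypotheses are exactly the content of the theorem, and you explicitly defer both ("is precisely the heart of Carleson's example"). In particular, no quilt with total area strictly less than $1$ is exhibited anywhere, so the iteration never gets off the ground; the existence of even one such object is the nontrivial point, since the tension you correctly identify (packing forces spreading, small area forces overlap) is precisely what makes it nonobvious that any $\theta<1$ is achievable.

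Beyond the missing base case, the inheritance of \eqref{eq:quilt_property} under substitution is not merely a bookkeeping issue. For a dyadic box $Q$ that is neither contained in nor contains a given $R\in\mathcal{R}$, the rectangles $\Phi_R(B)$ contained in $Q$ contribute up to $|R\cap Q|$, and the sum $\sum_{R}|R\cap Q|$ over all $R$ crossing $Q$ is in general not bounded by $|Q|$ (this sum being large is, in fact, the very mechanism that makes the union small). So "bounded overlap multiplicity of $\mathcal{R}_0$" is not obviously a sufficient condition, nor is it shown to be compatible with $\theta<1$; likewise the "alignment" needed to get the multiplicative decay $|U^{(k)}|\le\theta\,|U^{(k-1)}|$ rather than the trivial $|U^{(k)}|\le|U^{(k-1)}|$ is asserted as a design goal but never realized. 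To turn this into a proof you would need to write down an explicit family (e.g.\ Carleson's, obtained by iterating shear-type maps of determinant one, with all $n+1$ dyadic aspect ratios of area $2^{-n}$ represented) and verify \eqref{eq:quilt_property} for it directly, rather than through a general substitution lemma whose hypotheses remain unverified.
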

 
Although the fact that the rectangles in Carleson's construction have the same area is not explicitly stated, it is immediate after an inspection of the proof, since the rectangles are obtained by applying iteratively linear maps of constant determinant to the square $[0,1)^2. $ 

Finally, to the quilt $\mathcal{R} $ we associate a finite collection of points in the bidisc in the following way. 
To any $R = I \times J \in \mathcal{R} $, we associate the point 
\begin{equation}
\label{eqn:z_R}
z_R := (\sqrt{1-|I|}e^{i\theta_1},\sqrt{1-|J|}e^{i\theta_2}), 
\end{equation}
where $\theta_1,\theta_2 $ are
the midpoints of $I,J $ respectively. Set $\Lambda(\mathcal{R}) = \{ z_R : R\in \mathcal{R} \} $. 
One can verify that if $\mathcal{R} $ is the collection of all dyadic rectangles then $\Lambda(\mathcal{R}) $ 
is a weakly separated sequence. Therefore any sequence associated to a quilt is 
in particular weakly separated, provided that all the rectangles in the quilt are different.




The next proposition is going to connect the quilt property \eqref{eq:quilt_property} 
with the column boundedness property. 

\begin{lem}\label{lem:colum_box}
 There exists an absolute constant $C>0 $ such that for every positive Borel
 measure $\mu $ on $\mathbb{D}^2 $ we have 

 \[ \sup_{z\in \mathbb{D}^2} \Vert g_z \Vert_{L^2(\mathbb{D}^2,\mu)} \leq C 
 \sup \frac{\mu(S(R))}{|R|}, \]
 where the second supremum is taken over all dyadic rectangles.

\end{lem}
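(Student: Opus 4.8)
Throughout put $A:=\sup_R \mu(S(R))/|R|$, the supremum being over dyadic rectangles $R$; if $A=\infty$ there is nothing to prove, so assume $A<\infty$. The estimate concerns one normalized kernel $g_z$ at a time and must be uniform in $z$, so I would fix $z=(z^1,z^2)\in\mathbb{D}^2$ and bound the square
\[
\Vert g_z\Vert_{L^2(\mathbb{D}^2,\mu)}^2=\int_{\mathbb{D}^2}\prod_{i=1}^2\frac{1-|z^i|^2}{|1-\overline{z^i}w^i|^2}\,d\mu(w).
\]
The decisive structural feature is that the integrand is a product of two one-variable Poisson-type kernels, so the problem reduces to a one-dimensional analysis performed separately in each slot. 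Writing $\ell_i:=1-|z^i|$ and $\theta_i:=\arg z^i$, the identity $|1-re^{i\phi}|^2=(1-r)^2+4r\sin^2(\phi/2)$ applied with $r=|z^i||w^i|$ yields the two-sided comparison
\[
\frac{1-|z^i|^2}{|1-\overline{z^i}w^i|^2}\asymp\frac{\ell_i}{\bigl(\ell_i+(1-|w^i|)+\rho(\arg w^i,\theta_i)\bigr)^2},
\]
where $\rho$ is the arc-length distance on $\mathbb{T}$.

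The plan is then a dyadic tent decomposition carried out independently in each variable. For $k\ge0$ set
\[
E_k^i:=\bigl\{w^i\in\mathbb{D}:\ 2^k\ell_i\le \ell_i+(1-|w^i|)+\rho(\arg w^i,\theta_i)<2^{k+1}\ell_i\bigr\}.
\]
Since the bracketed quantity is always at least $\ell_i$ and at most an absolute constant, the family $(E_k^i)_{k\ge0}$ covers $\mathbb{D}$, and hence $\mathbb{D}^2=\bigcup_{k_1,k_2\ge0}E_{k_1}^1\times E_{k_2}^2$. Two observations do the work. First, the comparison above gives the uniform bound $\prod_i\frac{1-|z^i|^2}{|1-\overline{z^i}w^i|^2}\lesssim (2^{2k_1}\ell_1)^{-1}(2^{2k_2}\ell_2)^{-1}$ on the tile $E_{k_1}^1\times E_{k_2}^2$. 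Second, $E_k^i$ is contained in the Carleson region $S(I_k^i)$ of the arc $I_k^i$ centred at $\theta_i$ of length $\asymp 2^k\ell_i$, so the tile lies in the box $S(R_{k_1,k_2})$ with $R_{k_1,k_2}=I_{k_1}^1\times I_{k_2}^2$ and $|R_{k_1,k_2}|\asymp 2^{k_1}\ell_1\,2^{k_2}\ell_2$.

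Feeding the box condition into these two facts, each tile contributes
\[
\int_{E_{k_1}^1\times E_{k_2}^2}\prod_i\frac{1-|z^i|^2}{|1-\overline{z^i}w^i|^2}\,d\mu\lesssim\frac{\mu\bigl(S(R_{k_1,k_2})\bigr)}{2^{2k_1}\ell_1\,2^{2k_2}\ell_2}\lesssim\frac{A\,|R_{k_1,k_2}|}{2^{2k_1}\ell_1\,2^{2k_2}\ell_2}\asymp A\,2^{-k_1}2^{-k_2},
\]
and summing the resulting double geometric series over $k_1,k_2\ge0$ produces $\Vert g_z\Vert_{L^2(\mathbb{D}^2,\mu)}^2\lesssim A$ with a constant independent of $z$ and $\mu$, which is the asserted bound.

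I expect the only genuinely delicate point — the main obstacle — to be the interface between the tent decomposition and the hypothesis, which is posited only for \emph{dyadic} rectangles, whereas the arcs $I_k^i$ are centred at $\theta_i$ and need not be dyadic. This is resolved by the standard fact that an arc of length $L$ is covered by a bounded number of dyadic arcs of comparable length; taking products, $S(R_{k_1,k_2})$ is covered by $O(1)$ dyadic boxes of comparable total size, so the middle inequality in the last display survives with only an absolute constant lost. The remaining ingredients — the two $\asymp$ comparisons and the convergence of the double geometric series — are routine.
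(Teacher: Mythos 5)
Your proof is correct and follows essentially the same route as the paper's: both reduce the dyadic one-box hypothesis to arbitrary rectangles via a bounded dyadic covering, then decompose $\mathbb{D}^2$ into dyadic shells/tents adapted to $z$, use the product structure of $|S(z,\cdot)|$ to get $\left(2^{k_1}\ell_1\,2^{k_2}\ell_2\right)^{-2}$-type decay on each shell, control each shell's mass by the one-box condition, and sum a geometric series. The only difference is organizational: you keep the two shell indices independent and sum $\sum_{k_1,k_2}2^{-k_1-k_2}$, whereas the paper groups the dilated boxes $S(2^jQ)$ by $|j|\le m$ and sums $\sum_m m\,2^{-m}$; both converge and give the same absolute constant dependence.
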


\begin{proof}
Let us call $M = \sup\{ \mu(S(R))|R|^{-1}: R \,\, \text{dyadic rectangle} \}$.   
First notice that any rectangle $I\times J $  can be covered by $4=2^2$ disjoint dyadic rectangles $I_i\times J_i $ so that $ |I| \leq |I_i| \leq 2|I| $ and $ |J| \leq |J_i| \leq 2|J|. $  
	Therefore for a general rectangle $ Q \subseteq \mathbb{T}^2 $ 
 \[ \frac{\mu(S(Q))}{|Q|} \leq 2^2\cdot 2^2 M.   \]
Next, fix $ z\in \mathbb{D}^2 $ and consider the rectangle $Q=I\times J$ so that $z_Q$ as in \eqref{eqn:z_R} concides with $z $.
Define, for any multi-index $j = (j_1,j_2) \in \mathbb{N}^2 $ the dilated rectangle
$2^jQ $ as the rectangle with the same center point as $R $, and having sides 
of lengths $2^{j_1}|I|$ and  $2^{j_2}|J|$. 
Set for all $m\in\mathbb{N} $, 
\[ A_m:=\bigcup_{|j|\leq m}S(2^j Q). \]
For $m $ large enough, $A_m =\mathbb{D}^2 $. If $ A^\circ_m : = A_m \setminus A_{m-1} $, it is then an elementary calculation  to prove that 
\[ |S(z,w)| \leq C \frac{1}{2^m|Q|}, \,\,\, \forall w\in A_m^\circ. \]

Also 
\[ \mu(A_m^\circ) \leq \mu(A_m) \leq M|Q| \sum_{|j|\leq m}2^{|j|} \leq C M | Q | m2^m. \]

 Thus 
\begin{align*}
	\Vert S_z\Vert^2_{L^2(\mathbb{D}^2,\mu)} \leq C \frac{1}{|Q|^2} \sum_{m\in \mathbb{N}} \frac{\mu(A_m^\circ)}{2^{2m} } \leq C M|Q|^{-1} \leq C M \Vert S_z\Vert^2_2.
\end{align*}
\end{proof}

Summarizing the above lemmata we have proved the following proposition 
\begin{coro}
\label{coro_finite_pieces}
	There exist a $0<\gamma<1$ and a $\Delta>0$ such that, given any $\varepsilon >0$,
there exists a finite 
$\sigma $ invariant	sequence of points $\Lambda \subset \mathbb{D}^2 $ such that 
	\begin{itemize}
		\item[(a)] $\Lambda $ is $\gamma $ weakly separated
		\item[(b)] $\Lambda $ is $\Delta $ column bounded
			
		\item[(c)] We have that 
			\[ \sup_{V \subseteq \mathbb{T}^2, \, \text{open}} \frac{\mu_\Lambda(S(V))}{|V|} \geq \varepsilon^{-1}. \]
			
		\item[(d)] The norm of the Szeg\"o kernel $ \Vert S_\lambda\Vert_2 $ is constant on $\Lambda. $ 
	\end{itemize}
\end{coro}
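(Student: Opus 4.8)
The plan is to assemble the three preceding results, namely Carleson's theorem on equiareal quilts, Lemma \ref{lem:colum_box} relating the quilt condition to column boundedness, and the elementary observations about weak separation of quilt-associated sequences, into a single packaged statement. The key point is that the constants $\gamma$ and $\Delta$ must be chosen \emph{uniformly}, independent of $\varepsilon$, while only the Carleson mass ratio in item (c) is allowed to blow up as $\varepsilon \to 0$. I would begin by invoking Carleson's theorem to obtain, for a parameter $\delta>0$ to be fixed, an equiareal quilt $\mathcal{R}$ of total area at most $\delta$. To secure the $\sigma$-invariance required in the statement, I would replace $\mathcal{R}$ by the symmetrized collection $\mathcal{R} \cup \sigma(\mathcal{R})$; since $\sigma$ preserves dyadic structure, areas, and the quilt inequality \eqref{eq:quilt_property} up to a harmless factor of $2$ (the union of two quilts is a quilt after rescaling the masses), this yields a $\sigma$-invariant equiareal quilt with total area $\lesssim \delta$, and I would absorb the factor into $\delta$.

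Next I would define $\Lambda := \Lambda(\mathcal{R})$ via the assignment \eqref{eqn:z_R}. Item (d) is then immediate from the equiareal property: if every $R = I \times J$ has $|I|\,|J| = a$ for a common value $a$, then $\Vert S_{z_R}\Vert_2^2 = (1-|I|)^{-1}(1-|J|)^{-1}$ need not be constant, so here I would use the sharper feature of Carleson's construction that the rectangles arise by iterating linear maps of constant determinant to $[0,1)^2$, which makes the \emph{pair} $(|I|,|J|)$ constant across the quilt, not merely the product; consequently $\Vert S_\lambda\Vert_2$ is genuinely constant on $\Lambda$. For item (a), I would cite the remark in the excerpt that any sequence associated to a quilt with distinct rectangles is weakly separated, and I would record the weak-separation constant $\gamma$ arising from the worst-case inner product between normalized Szegő kernels of distinct quilt points; crucially this bound depends only on the dyadic geometry and not on $\delta$, so $\gamma$ is universal.

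For item (b), I would apply Lemma \ref{lem:colum_box} to the measure $\mu_\Lambda$ from \eqref{eqn:muLambda}. The lemma bounds $\sup_z \Vert g_z\Vert_{L^2(\mathbb{D}^2,\mu_\Lambda)}$ by an absolute constant times the supremal mass ratio $\mu_\Lambda(S(R))/|R|$ over dyadic rectangles. The quilt inequality \eqref{eq:quilt_property} is exactly what controls this ratio: for any dyadic $Q$, the mass $\mu_\Lambda(S(Q))$ is comparable to $\sum_{R \subseteq Q}|R| \le |Q|$, giving a bound of $1$ on the ratio independent of $\delta$. Since $\Delta = \sup_m \sum_{n\neq m}|\langle g_n,g_m\rangle|^2$ is precisely $\sup_m \Vert g_m\Vert^2_{L^2(\mu_\Lambda)} - 1$ after subtracting the diagonal contribution (each diagonal term being $(\prod_i 1-|\lambda^i_m|^2)\cdot\Vert g_m\Vert_2^2$, a bounded quantity), the column-boundedness constant $\Delta$ is likewise universal. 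This is the step I expect to require the most care: one must verify that the diagonal terms are uniformly bounded and that the Lemma's output translates cleanly into the $\ell^2$-column sum defining $\Delta$, so that no hidden dependence on $\delta$ sneaks in.

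Finally, item (c) is where $\delta$ is spent. Taking $V$ to be the union $\bigcup_{R\in\mathcal{R}} R' $ of the top faces of the boxes (or the full torus), the total mass $\mu_\Lambda(\mathbb{D}^2) = \sum_R \prod_i(1-|\lambda^i_R|^2) = \sum_R |I|\,|J| = \sum_R |R| = 1$ by the normalization $\sum_R |R| = 1$, while the Lebesgue measure of the relevant open set is the total area $\big|\bigcup R\big| \lesssim \delta$. Hence the ratio $\mu_\Lambda(S(V))/|V|$ is at least of order $\delta^{-1}$, and choosing $\delta$ comparable to $\varepsilon$ forces this ratio to exceed $\varepsilon^{-1}$, as required. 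The main obstacle throughout is bookkeeping the constants: ensuring that symmetrization, the passage from general to dyadic rectangles, and the translation through Lemma \ref{lem:colum_box} all preserve uniformity of $\gamma$ and $\Delta$, isolating $\varepsilon$-dependence entirely into the Carleson mass ratio of item (c).
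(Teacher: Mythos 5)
Your proposal follows the paper's proof almost step for step: symmetrize an equiareal quilt of area at most $\varepsilon$, take $\Lambda=\Lambda(\mathcal{R}\cup\sigma(\mathcal{R}))$, obtain (a) from the general weak-separation remark for quilt sequences, (b) from Lemma \ref{lem:colum_box} together with the quilt inequality \eqref{eq:quilt_property}, and (c) by testing on $V=\bigcup_{R\in\mathcal{R}}R$. The one genuine error is in your treatment of (d). From \eqref{eqn:z_R} one has $1-|z_R^1|^2=1-(1-|I|)=|I|$ and $1-|z_R^2|^2=|J|$, so that $\Vert S_{z_R}\Vert_2^2=(|I|\,|J|)^{-1}=|R|^{-1}$. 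You instead computed $(1-|I|)^{-1}(1-|J|)^{-1}$, concluded that equiareality is insufficient, and patched this by asserting that in Carleson's construction the pair $(|I|,|J|)$ --- not merely the product $|I|\,|J|$ --- is constant across the quilt. That assertion is false: ``constant determinant'' only forces constant area, and the rectangles in Carleson's construction necessarily have wildly varying eccentricities. Indeed, a dyadic equiareal quilt whose rectangles are all congruent consists of distinct tiles of a fixed dyadic grid, so $\sum_R|R|=1$ forces its union to have full measure, and the construction would be useless. Fortunately the correct computation shows that $\Vert S_{z_R}\Vert_2$ depends only on $|R|$, so equiareality is exactly what is needed and (d) holds as stated --- but for the paper's reason, not yours.

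Two smaller points. In (c), the parenthetical alternative of taking $V$ to be the full torus gives only a ratio of order $1$; the choice $V=\bigcup_{R\in\mathcal{R}}R$, with $|V|\le\varepsilon$ and $\mu_\Lambda(S(V))\ge\sum_{R}|R|=1$, is the one that works. In (b), the diagonal term of $\sum_n|\langle g_n,g_m\rangle|^2$ is exactly $1$ because the kernels are normalized, which is cleaner than the bounded-but-unidentified quantity you describe; with that observation your identification $\Delta=\sup_m\Vert g_m\Vert^2_{L^2(\mu_\Lambda)}-1$ matches the paper's.
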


\begin{proof}
	Consider an equiareal quilt $\mathcal{R} $ of total area less or equal to $ \varepsilon $ and let $ \Lambda : = \Lambda(\sigma(\mathcal{R})\cup \mathcal{R}). $ By construction, the  sequence $\Lambda  $ is $\sigma $ invariant.  Recall that any  sequence $\Lambda $ constructed in this manner is weakly separated by a uniform constant $\gamma $, hence $(a) $ 
	follows.
	Let $\mu_\Lambda $ the measure associated to the sequence and notice that by Lemma \ref{lem:colum_box} and the quilt property \eqref{eq:quilt_property} we have 
	\[ \sum_{n=1}^\infty|\langle g_n, g_m \rangle|^2 = \Vert g_m\Vert^2_{L^2(\mathbb{D}^2,\mu_\Lambda)} \leq \Delta + 1, \]
	where $\Delta $ depends only on the one-box constant of the measure $\mu_\Lambda$.
	In order to prove $(c) $ let $V = \bigcup_{R\in \mathcal{R}}R $. Then 
	\[ \frac{\mu_\Lambda(S(V))}{|V|} \geq \varepsilon^{-1} \sum_{\lambda\in\Lambda} \Vert S_\lambda \Vert^2_2 = \varepsilon^{-1}\sum_{R\in \mathcal{R}}|R| = \varepsilon^{-1}. \]
  Finally, property $(d) $ comes from the fact that the quilt is equiareal.  	
\end{proof}

\section{Proof of Theorem \ref{thm:main}}
\label{sec:proof}

Clearly, it is enough to argue in the case $d=2$. Let, for all $L\in\R_+$, 
\[
\varphi_L(\xi):=(1/2, 1/2)+\frac{\xi-(1/2, 1/2)}{L}\qquad \xi\in\T^2
\]
be the homothety that rescales $\T^2$ onto a square centered at $(1/2, 1/2)$ of side parallel to the ones of $\T^2$ of length $1/L$. Let $\Lambda_M$ be the finite collection of points from Corollary \ref{coro_finite_pieces}, $\varepsilon_M=1/M$, and let $\quilt_M$ be its associated rectangles.  Consider the collection of rectangles
\[
\Phi:=\bigcup_{M\in\mathbb{N}}\varphi_{L_M}(\quilt_M),
\]
where  $(L_M)_M$ is a positive sequence to be chosen later. Set $\Lambda_0:=\Lambda(\Phi)$. Then by Corollary \ref{coro_finite_pieces} (c), $\Lambda_0$ does not satisfy the Chang-Stein condition. Moreover, by construction $\Lambda$ is $\sigma$-invariant, hence it is not a Carleson sequence for all $H^p(\mathbb{D}^2)$, via an application of Lemma \ref{lem:symmetric_carleson}. We are then left to show that $\Lambda_0$ is simply interpolating for $H^p(\mathbb{D}^2)$ for all $1\leq p\le2$, provided that $(L_M)_M$ diverges fast enough. First note that the simply interpolation constant of each finte collection $\Lambda_M:=\Lambda(\varphi_{L_M}(\quilt_M))$ is bounded by a constant $C$ uniformly, thanks to Proposition \ref{prop:bound_inv}.

Let, for all $i\in\N$, $Z_i\subset\D$ be the collection of the projection on the first variables of all points in $\Lambda_i$, and set 
\[
\Theta_i(z):=\prod_{\lambda\in Z_i}\frac{\lambda-z}{1-\overline{\lambda}z}\qquad z\in\D.
\]
Note that the number of points in each $\quilt_M$ is independent of the sequence $(L_M)_M$. Hence if $(L_M)_M$ diverges fast enough, one has
\[
\inf_{z\in\D, i\in\N}\left\{|\Theta_i(z)|+\prod_{j\ne i}|\Theta_j(z)|\right\}\ge\delta>0.
\]

Hence, \cite[Theorem 3.2.14]{Nikolski02}, for all bounded sequences $(w_n)_n$ there exists a bounded analytic function $\varphi$ on the unit disc such that 
\[
\varphi_{|Z_i}=w_i\qquad i\in\N,
\]
and once we set $\phi(z_1, z_2):=\varphi(z_1)$ we obtain
\[
\phi_{|\Lambda_i}=w_i\qquad i\in\N.
\]
The argument in \cite[Theorem 2.2, p. 288]{Garnett} ( see \cite[Theorem 4.1]{Dayan20} for an adaptation to similar generalized interpolation problems) yields the existence, for all $M$, of functions $\phi_1, \dots, \phi_M$ on $\D^2$ such that ${\phi_i}_{|\Lambda_j}=\delta_{i, j}$ and 
\begin{equation}
\label{eqn:rowmult}
\sup_{z\in\D^2}\sum_{i=1}^M|\phi_i(z)|\leq B,
\end{equation}
 where $B=B_\delta$ is independent of $M$. Let $C_M$ denote the simply interpolation constant of the collection $\bigcup_{i=1}^M\Lambda_i$. We are left to show that 
\begin{equation}
\label{eqn:sup:C_M}
\sup_M C_M<\infty
\end{equation}
To this end, let $(a_n)_{n=1}^{|\bigcup_{i=1}^M\Lambda_i|}$ be a collection of targets, and set $A_i:=\sum_{\lambda_n\in\Lambda_i}|a_n|^p$. Thanks to Proposition \ref{prop:bound_inv}, for all $i=1, \dots, M$ there exists a function $f_i$ on the bidisc so that $\|f_i\|_{H^p(\D^2)}\le C A_i^\frac{1}{p}$, and  $f_i(\lambda_n)=a_n(1-|\lambda_n|^2)^{-\frac{1}{p}}$ for all $\lambda_n\in\Lambda_i$. Hence the function 
\[
f=\sum_{i=1}^M \phi_if_i
\]
interpolates the values $a_n(1-|\lambda_n|^2)$ at the points of $\bigcup_{i=1}^M\Lambda_i$, and thanks to \eqref{eqn:rowmult} one has that 
\[
\|f\|_{H^p(\D^2)}\leq B\left(\sum_{i=1}^M\|f_i\|_{H^p(\D^2)}^p\right)^\frac{1}{p}\leq B C \|a\|_{\ell^p}, 
\]
showing \eqref{eqn:sup:C_M}.
This concludes the proof of Theorem \ref{thm:main}.

\bibliography{biblio.bib}

\bibliographystyle{abbrv}

\end{document}